\newcommand{\Div}{\divergence}
\newcommand{\R}{\mathbb R}
\newcommand{\N}{\mathbb N}
\newcommand{\E}{\mathbb E}
\newcommand{\p}{\mathbb P}
\newcommand{\Q}{\mathcal Q}
\newcommand{\F}{\mathcal F}
\newcommand{\HH}{\mathscr H}
\begin{document}

\begin{frontmatter}

\title{Regularity theory for nonlinear systems of SPDEs}

\author{Dominic Breit}
\ead{Breit@math.lmu.de}

\address{LMU Munich,
Mathematical Institute,
Theresienstra\ss e 39,
80333 Munich -- Germany}

  \begin{abstract}
We consider systems of stochastic evolutionary equations of the type
$$d\bfu=\Div\,\bfS(\nabla \bfu)\,dt+\Phi(\bfu)d\bfW_t$$
where $\bfS$ is a non-linear operator, for instance the $p$-Laplacian
$$\bfS(\bfxi)=(1+|\bfxi|)^{p-2}\bfxi,\quad \bfxi\in\R^{d\times D},$$
with $p\in(1,\infty)$ and $\Phi$ grows linearly. We extend known results about 
the deterministic problem to the stochastic situation. First we verify the natural regularity: 
\begin{align*}
\E\bigg[\sup_{t\in(0,T)}\int_{G'}|\nabla\bfu(t)|^2\,dx+\int_0^T\int_{G'}|\nabla\bfF(\nabla\bfu)|^2\,dx\,dt\bigg]<\infty,
\end{align*}
where $\bfF(\bfxi)=(1+|\bfxi|)^{\frac{p-2}{2}}\bfxi$. If we have Uhlenbeck-structure then
$\E\big[\|\nabla\bfu\|_q^q\big]$ is finite
for all $q<\infty$ if the same is true for the initial data.
  \end{abstract}

  \begin{keyword}
   Parabolic stochastic PDE's; Non-linear Laplacian-type systems; Existence of weak solutions; Regularity of solutions;\\

   2010 MSC: 35R60\sep 35D30\sep
    60H15\sep 35K55 \sep 35B65 
  \end{keyword}

\end{frontmatter}

\section{Introduction}

We will study existence and regularity of solutions $\bfu:\Q\rightarrow\R^D$,
 $\Q:=(0,T)\times G$ with $T>0$ and $G\subset\R^d$ bounded, to systems of stochastic PDE's of the type
\begin{align}\label{eq:}
\begin{cases}d\bfu&=\Div\,\bfS(\nabla \bfu)\,dt+\Phi(\bfu)d\bfW_t\\
\bfu(0)&=\bfu_0\end{cases}.
\end{align}
Here $\bfS:\R^{d\times D}\rightarrow\R^{d\times D}$ is a non-linear operator and $\bfu_0$ some in general random initial datum. The most famous example is the
 $p$-Laplacian operator
\begin{align}\label{eq:S}
\bfS(\bfxi)=(1+|\bfxi|)^{p-2}\bfxi,\quad \bfxi\in\R^{d\times D},
\end{align}
with $p\in(1,\infty)$.
Equation (\ref{eq:}) is an abbreviation for
\begin{align}\label{eq:strong}
\bfu(t)&=\bfu_0+\int_0^t\Div\,\bfS(\nabla \bfu)\,d\sigma+\int_0^t\Phi(\bfu)\,d\bfW_\sigma
\end{align}
$\p\otimes\mathcal L^1$-a.e.
We assume that $\bfW$ is a Brownian motion with values in a Hilbert space (see section 2 for details) .
We suppose linear growth of $\Phi$ -  roughly speaking $|\Phi(\bfu)|\leq c(1+|\bfu|)$ and $|\Phi'(\bfu)|\leq c$ (for a precise formulation see (\ref{eq:phi}) in section 2).
The motivation for this is an interaction between the solution and the random perturbation caused by
the Brownian motion. For large values of $|\bfu|$ we expect a larger perturbation than for small values.\\
Since the $p$-Laplace equation is a basic problem in non-linear PDE's it can be understood as a
 model-problem to a large class of equations. In view of applications we especially mention the flow of
 Non-Newtonian fluids (see for instance \cite{DRW}, \cite{BrDS}, \cite{TeYo} and \cite{MNRR}) which might be the topic of some future projects. The deterministic equivalent to the equation mentioned above is already well understood, we refer to \cite{Gi}, \cite{Giu} and \cite{Uh} for the stationary case and to \cite{LaUrSo} \cite{DiFr},\cite{Wi} for the evolutionary situation. We also refer to the survey papers \cite{Mi} and \cite{DuMiSt} giving a nice overview.\\
Regarding the stochastic problem there is a lot of literature regarding the existence of solutions
to nonlinear evolutionary equations. The popular variational approach by Pardoux \cite{Pa} for SPDEs provides an existence theory for a quite general class of equations. It requires a Banach space $V$ which is continuously embedded into the Hilbert space $\mathscr H$ on which the equation is considered. The main part of the equation is to be understood in the dual $V^*$. In the situation (\ref{eq:})-(\ref{eq:S}) we have $V=\mathring{W}^{1,p}(G)$ and $\mathscr H=L^2(G)$.
Although this does not include the case $p\leq\frac{2d}{d+2}$ (this bound arises from Sobolev's Theorem) system (\ref{eq:}) can still be treated by slightly modified arguments. For recent developments we refer to  \cite{LiRo} and \cite{PrRo}.\\
However, there is not much literature about the regularity for nonlinear stochastic problems like (\ref{eq:}). Certain regularity results about nonlinear stochastic PDEs are known:
\begin{itemize}
%\item A classical reference for nonlinear stochastic PDE's clearly is the book of Da Prato and Zabczyk \cite{PrZa}. Although their assumptions on lower order nonlinearities are quite general the main part of the equation is still assumed to be a linear operator. 
%\item The popular variational approach by Pardoux \cite{Pa} for SPDE's (see \cite{LiRo} and \cite{PrRo} for recent results and further references) provides an existence theory for a quite general class of equations. It requires a Banach space $V$ which is continuously embedded into the Hilbert space $\mathscr H$ on which the equation is considered. The main part of the equation is to be understood in the dual $V^*$. In the situation (\ref{eq:})-(\ref{eq:S}) we have $V=\mathring{W}^{1,p}(G)$ and $\mathscr H=L^2(G)$. Hence the bound $p\geq\frac{2d}{d+2}$ arises from Sobolev's Theorem.
\item In \cite{Ho1} and \cite{Ho2} semilinear stochastic PDEs are considered, were also regularity
statements are shown. Anyway, the elliptic part of the equations studied there is still linear.
\item Zhang \cite{Zh} observes non-linear stochastic PDEs but only in space-dimension one.
%\item Much work is done about the stochastic Navier-Stokes equations. First results about the existence of weak solutions are shown by Bensoussan and Temam \cite{BeTe}. Nowadays there is a wide range of literature in this topic, see \cite{Fl2} for an overview.
%\item Lions and Souganidis study the existence of weak solutions to a very general class of  non-linear parabolic stochastic PDE's (see \cite{LiSo1} and \cite{LiSo2}) including weak viscosity solutions to the $p$-Laplace equation. Their approach is based on a maximum principle for the solutions, so it does not extend to the case of systems. Moreover they do not study regularity properties.
\item Very recently the regularity of certain nonlinear parabolic systems with stochastic perturbation was investigated in \cite{BeFl}. The results are $C^\alpha$-estimates
for the solution under a quadratic growth assumption. 
%\item In \cite{ChCh} bipolar Non-Newtonian fluid were observed. The dependence on the (symmetric) gradient is like in (\ref{eq:S}) but the bi-Laplacian is added. So the leading part of the equation (which here depends on the fourth derivative in the classical formulation) is still linear. Moreover, only the case $p<2$ is investigated.
%\item Terasawa and Yoshida(\cite{Yo} and \cite{TeYo}) investigate existence theory for Non-Newtonian fluids, where in addition to the non-linear term $\bfS$ (which depends here on the symmetric gradient) the convective term $(\nabla\bfu)\bfu$ and an unknown pressure function $\pi$ appear. Due to the additional difficulties they are restricted to certain values for $p$. In the physical relevant case $d=3$ they require $p\geq\tfrac{9}{5}$ (the best known result in the deterministic case is  $p>\tfrac{6}{5}$, see \cite{DRW} and \cite{BrDS}). Furthermore their study is restricted to a periodic boundary which allows very special test-functions to the equation which cannot be used for general domains. Unfortunately, in \cite{Yo}, \cite{TeYo} the stochastic perturbation does not depend on the solution as in (\ref{eq:}) modelled via the function $\Phi$.%, although it %would be quite reasonable form the physical point of view.
\end{itemize}
The literature dedicated to the regularity theory for linear SPDEs is quite extensive, we refer to \cite{Kr}, \cite{KrRo1}, \cite{KrRo2}, \cite{Fl1} and the references therein.
The situation in the non-linear case is different, as explained above and to our best knowledge there is nothing about regularity  for the stochastic $p$-Laplacian system.
Hence this is the aim of the present paper. We will prove the following statements:
\begin{itemize}
%\item There is a weak solution to (\ref{eq:}), see Theorem \ref{thm:2.2}. Our approach is unified for all $p\in(1,\infty)$ since we work in the abstract setting of stochastic evolution equations. The known results (see for instance \cite{LiRo}) are improved since we do not require the existence of a Gelfand-triple.
\item The weak solution $\bfu$ is a strong solution to (\ref{eq:}) and it holds
\begin{align}
\E\bigg[\sup_{t\in(0,T)}\int_{G'}|\nabla\bfu(t)|^2\,dx+\int_0^T\int_{G'}|\nabla\bfF(\nabla\bfu)|^2\,dx\,dt\bigg]<\infty\label{neu}
\end{align}
for all $G'\Subset G$, where $\bfF(\bfxi)=(1+|\bfxi|)^{\frac{p-2}{2}}\bfxi$ (see Theorem \ref{thm:2.3} and Theorem and \ref{thm:sub2}).
\item Let $\bfS(\bfxi)=\nu(|\bfxi|)\bfxi$ for $\nu:[0,\infty)\rightarrow[0,\infty)$\footnote{the so-called Uhlenbeck-structure, see \cite{Uh}} and $p>2-\frac{4}{d}$. Then
the strong solution $\bfu$ satisfies
\begin{align}\label{eq:moser}
\E\bigg[\int_0^T\int_{G'}|\nabla\bfu|^q\,dx\,dt\bigg]<\infty
\end{align}
for all $G'\Subset G$ and all $q<\infty$ (see Theorem \ref{thm:4.3}).
\end{itemize}

\begin{remark}
\begin{enumerate}
\item The estimate in (\ref{neu}) is the natural extension of the results for non-linear PDE's in the deterministic situation to the stochastic setting, see \cite{DuMiSt} (chapter 5). In the deterministic case, it is also
quite standard to get regularity results in time: testing with $\partial_t^2\bfu$ gives $\partial_t\bfu\in L^\infty(L^2)$ and $\partial_t\bfF(\nabla\bfu)\in L^2(\Q)$. Due to the appearance of the Brownian motion such a results cannot be true for the stochastic problem.
\item We consider only the non-degenerated case, see (\ref{eq:Sp}). However, the regularity estimates are independent of $\Lambda$ which means it is possible to obtain results for the degenerate case via approximation.
\item It is only a technical matter to assume general Dirichlet boundary conditions. In order to keep the proofs easier, we assume them to be zero.
%\item We separate existence and regularity approach. So the existence proof is also applicable for more general situations where regularity is not expected or even false. For instance, it is straighforward to extend the existence proof in Theorem \ref{thm:2.2} to the system
%\begin{align*}
%\begin{cases}d\bfu&=\Div\,\bfS(\nabla \bfu)\,dt+\Div\bfF\,dt+\Phi(\bfu)d\bfW_t\\
%\bfu(0)&=\bfu_0\end{cases}
%\end{align*}
%with $\bfF\in L^{p'}(\Omega,\F,\p;L^{p'}(\Q))$.  Moreover, we hope to apply similar techniques for generalized Navier-Stokes equations in the future.
\item A lot of other statements which are known in the deterministic situation are still open for the stochastic problem. For instance partial regularity for the parabolic problems with $p$-structure which is shown in \cite{DuMi} via the $\mathcal A$-caloric approximation method.
\item The proof of (\ref{eq:moser}) is based on Moser iteration (see for instance \cite{GT}, ch. 8.5, for a nice presentation in the stationary deterministic case). Moser iteration in the stochastic setting also appears
in \cite{DMS1}-\cite{DMS3}. The authors study estimates and maximum princples for the solution to
SPDEs with a linear operator in the main part. This paper is concerned with gradient estimates for nonlinear systems of SPDEs.
\end{enumerate}
\end{remark}

Our procedure is as follows:
In section 3 we study the case $p\geq2$.
We apply the difference quotient method to gain higher differentiability and the corresponding estimates in the superquadratic case (section 4). Since this does not work immediately if $p<2$ we approximate by a quadratic problem and show uniform estimates.
We have to combine the techniques from non-linear PDE's with stochastic calculus for martingales. Note that it is not possible to work directly with test functions. Instead we apply It\^{o}'s formula to certain functions of $\bfu$.\\
%In section 5, we consider degenerated problems as limit of non-degenerated ones. 
Finally we prove arbitrarily high integrability of $\nabla\bfu$ under special structure assumptions. This is done by Moser iteration.

\section{Probability framework}
Let $(\Omega,\mathcal F,\p)$ be a probability space equipped with a filtration $\left\{\mathcal F_t,\,\,0\leq t\leq T\right\}$, which is a nondecreasing
family of sub-$\sigma$-fields of $\mathcal F$, i.e. $\mathcal F_s\subset\mathcal F_t$ for $0\leq s\leq t\leq T$. We further assume that $\left\{\mathcal F_t,\,\,0\leq t\leq T\right\}$ is right-continuous and $\mathcal F_0$ contains all the $\p$-negligible events in $\mathcal F$.\\
For a Banach space $(X,\|\cdot\|_X)$ we denote for $1\leq p<\infty$ by $L^p(\Omega,\mathcal F,\p;X)$ the Banach space of all $\F$-measurable functions $v:\Omega\rightarrow X$ such that
\begin{align*}
\E\big[\|v\|_X^p\big]<\infty,
\end{align*}
where the expectation is taken w.r.t. $(\Omega,\mathcal F,\p)$.\\
Let $U,\HH$ be two separable Hilbert spaces and let $(\bfe_k)_{k\in\N}$ be an orthonormal basis of $U$. We denote by $L_2(U,\HH)$ the set of Hilbert-Schmidt operators from $U$ to $\HH$.  %Define further $U_0\supset U$  as
%\begin{align*}
%U_0:=\left\{\bfe=\sum_k \alpha_k\bfe_k\in U:\,\,\sum_k \frac{\alpha_k^2}{k^2}<\infty\right\},
%\end{align*}
%thus the embedding $U\hookrightarrow U_0$ is Hilbert-Schmidt and trajectories of $\bfW$ are $\p$-a.s. continuous (see \cite{PrZa}).\\
Throughout the paper we consider a cylindrical Wiener process
$\bfW=(\bfW_t)_{t\in[0,T]}$ which has the form
\begin{align}\label{eq:W}
\bfW(\sigma)=\sum_{k\in\N}\beta_k(\sigma)\bfe_k
\end{align}
with a sequence $(\beta_k)$ of independent real valued Brownian motions on $(\Omega,\mathcal F,\p)$. Now 
\begin{align*}
\int_0^t \psi(\sigma)d\bfW_\sigma,\quad \psi\in L^2(\Omega,\F,\p;L^2(0,T;L_2(U,\HH))),
\end{align*}
with $\psi$ progressively  $(\F_t)$-measurable,
defines a $\p$-almost surely continuous $L^2(\Omega)^D$ valued $\mathcal F_t$-martingale.\footnote{for stochastic calculus in infinite dimensions we refer to \cite{PrZa}} Moreover, we can multiply with test-functions since
 \begin{align*}
\bigg\langle\int_0^t \psi(\sigma)d\bfW_\sigma,\bfphi\bigg\rangle_\HH=\sum_{k=1}^\infty \int_0^t\langle\psi(\sigma)( \bfe_k),\bfphi\rangle_\HH\,d\beta_k(\sigma),\quad \bfphi\in \HH,
\end{align*}
is well-defined (the series converges in $L^2(\Omega,\F,\p; C[0,T])$).\\
Our actual aim is the study of the system (\ref{eq:}), where $\bfH=L^2(G)$, $V=\mathring{W}^{1,p}(G)$:
\begin{align}\label{eq:auxp}
\begin{cases}d\bfu&=\Div\,\bfS(\nabla \bfu)\,dt+\Phi (\bfu)\,d\bfW_t\\
\bfu(0)&=\bfu_0\end{cases},
\end{align}
where $\bfS:\R^{d\times D}\rightarrow \R^{d\times D}$ is $C^1$ and fulfils
\begin{align}\label{eq:Sp}
\lambda (1+|\bfxi|)^{p-2}|\bfzeta|^2\leq D\bfS(\bfxi)(\bfzeta,\bfzeta)\leq \Lambda(1+|\bfxi|)^{p-2} |\bfzeta|^2
\end{align}
for all $\bfxi,\bfzeta\in\R^{d\times D}$ with some positive constants $\lambda,\Lambda$ and $p\in(1,\infty)$. Suppose that $\Phi$ satisfies (\ref{eq:phi}).
\begin{definition}[weak solution]
\label{def:weakp}
$\left.\right.$\\
Let $\bfW$ be a Brownian motion as in (\ref{eq:W}) on a probability space $(\Omega,\F,\p)$ with filtration $(\F_t)$.
A function $\bfu\in L^2(\Omega,\F,\p;L^\infty(0,T;L^2(G)))\cap L^p(\Omega,\F,\p; L^p(0,T;\mathring{W}^{1,p}(G)))$ which is progressively $(\F_t)$-measureable is called a weak solutions to (\ref{eq:}) if
for every $\bfphi\in C^\infty_0(G)$ it holds for a.e. $t$
\begin{align*}
\int_G\bfu(t)\cdot\bfvarphi\,dx &+\int_G\int_0^t\bfS(\nabla \bfu(\sigma)):\nabla\bfphi\,dx\,d\sigma\\&=\int_G\bfu_0\cdot\bfvarphi\,dx+\int_G\int_0^t\Phi(\bfu)\,d\bfW_\sigma\cdot \bfvarphi\,dx
\end{align*}
$\p$-almost surely.
\end{definition}
In order to show regularity of solutions we suppose the following linear growth assumptions on $\Phi$ (following \cite{Ho1}): For each $\bfz\in L^2(G)$ there is a mapping $\Phi(\bfz):U\rightarrow L^{2}(G)^D$ defined by $\Phi(\bfz)\bfe_k=g_k(\cdot,\bfz(\cdot))$. In particular, we suppose
that $g_k\in C^1(G\times\R^D)$ and the following conditions
\begin{align}\label{eq:phi}\begin{aligned}
\sum_{k\in\N}|g_k(x,\bfxi)|^2 \leq c(1+|\bfxi|^2)&,\quad
\sum_{k\in\N}|\nabla_{\bfxi} g_k(x,\bfxi)|^2\leq c,\quad\bfxi\in\R^D,\\
\sum_{k\in\N}|\nabla_x g_k(x,\bfxi)|^2 &\leq c(1+|\bfxi|^2).
\end{aligned}
\end{align}
\begin{definition}[strong solution]
\label{def:strong}
$\left.\right.$\\
A weak solution is called a strong solutions to (\ref{eq:})\\ if $\Div\bfS(\nabla \bfu)\in L^1(\Omega,\F,\p;L^1_{loc}(\Q))$ and
\begin{align*}
\bfu(t)&=\bfu_0+\int_0^t\Div\,\bfS(\nabla \bfu)\,d\sigma+\int_0^t\Phi(\bfu)\,d\bfW_\sigma
\end{align*}
holds $\p\otimes \mathcal L^{d+1}$-a.e.
\end{definition}

\section{Regularity for $p\geq2$}
Throughout this section we study problems of the type (\ref{eq:}) with (\ref{eq:Sp}) for $p\geq2$. In the following section we consider subquadratic problems regularized by quadratic ones.

\begin{theorem}[Regularity]\label{thm:2.3}
$\left.\right.$\\
Assume $\bfu_0\in L^2(\Omega,\mathcal F_0,\p,\mathring{W}^{1,2}(G))$, (\ref{eq:Sp}) with $p\geq2$ and (\ref{eq:phi}). 
Then the unique weak solution $\bfu$ to (\ref{eq:}) is a strong solution and satisfies
\begin{align*}
\E\bigg[\sup_{t\in(0,T)}\int_{G'}|\nabla\bfu(t)|^2\,dx+\int_0^T\int_{G'}|\nabla\bfF(\nabla\bfu)|^2\,dx\,dt\bigg]<\infty
\end{align*}
for all $G'\Subset G$.
\end{theorem}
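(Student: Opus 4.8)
\quad
The plan is to combine the spatial difference--quotient method from the deterministic regularity theory for the $p$-Laplacian with the variational It\^o formula. I take existence and uniqueness of the weak solution $\bfu\in L^2(\Omega;L^\infty(0,T;L^2(G)))\cap L^p(\Omega;L^p(0,T;\mathring{W}^{1,p}(G)))$ as given; the task is to produce the local second--order bound and then read off the strong--solution property. Fix $G'\Subset G$ and a cut-off $\eta\in C_0^\infty(G)$ with $\eta\equiv1$ on $G'$, and for $h\neq0$ and a coordinate direction $e_s$ write $\Delta_h\bfv(x)=\tfrac1h\big(\bfv(x+he_s)-\bfv(x)\big)$ for the difference quotient. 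Applying $\Delta_h$ to the integrated equation, the process $\Delta_h\bfu$ solves
\begin{align*}
d(\Delta_h\bfu)=\Div\big(\Delta_h\bfS(\nabla\bfu)\big)\,dt+\Delta_h\Phi(\bfu)\,d\bfW_t ,
\end{align*}
where $\Delta_h\bfS(\nabla\bfu)\in L^{p'}$, so that the drift lies locally in the dual $W^{-1,p'}$.

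First I would apply the It\^o formula for the square of the $L^2$-norm (in the Krylov--Rozovskii/Pardoux form, valid because $\Delta_h\bfu$ sits in the variational class with dual-valued drift) to $\int_G\eta^2|\Delta_h\bfu|^2\,dx$. This gives, for a.e. $t$,
\begin{align*}
\int_G\eta^2|\Delta_h\bfu(t)|^2\,dx
=\int_G\eta^2|\Delta_h\bfu_0|^2\,dx
-2\int_0^t\!\!\int_G\nabla(\eta^2\Delta_h\bfu):\Delta_h\bfS(\nabla\bfu)\,dx\,d\sigma
+I_{\mathrm{corr}}+M_t ,
\end{align*}
where $M_t$ is the martingale from the stochastic integral and $I_{\mathrm{corr}}=\sum_k\int_0^t\int_G\eta^2|\Delta_h g_k(\cdot,\bfu)|^2\,dx\,d\sigma$ is the quadratic--variation correction. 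Splitting $\nabla(\eta^2\Delta_h\bfu)=\eta^2\Delta_h\nabla\bfu+2\eta\,\nabla\eta\otimes\Delta_h\bfu$ and invoking the two structural inequalities implied by (\ref{eq:Sp}) --- the monotonicity bound $\Delta_h\bfS(\nabla\bfu):\Delta_h\nabla\bfu\ge c\,|\Delta_h\bfF(\nabla\bfu)|^2$ and the growth bound $|\Delta_h\bfS(\nabla\bfu)|\le c\,(1+|\nabla\bfu(\cdot+he_s)|+|\nabla\bfu|)^{\frac{p-2}{2}}|\Delta_h\bfF(\nabla\bfu)|$ --- the principal term yields the coercive quantity $-c\int\eta^2|\Delta_h\bfF(\nabla\bfu)|^2$, while the cut-off error is absorbed by Young's inequality at the cost of a lower-order term controlled by $\int|\Delta_h\bfu|^2$.

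The core step is to bound the two error contributions uniformly in $h$. For $I_{\mathrm{corr}}$ I use (\ref{eq:phi}): since $\Delta_h g_k(\cdot,\bfu)$ mixes an $x$-difference and a $\bfu$-difference, the bounds $\sum_k|\nabla_{\bfxi}g_k|^2\le c$ and $\sum_k|\nabla_x g_k|^2\le c(1+|\bfxi|^2)$ give $\sum_k|\Delta_h g_k(\cdot,\bfu)|^2\le c\big(1+|\bfu|^2+|\Delta_h\bfu|^2\big)$, where $1+|\bfu|^2$ is integrable by the weak-solution class and $|\Delta_h\bfu|^2$ feeds Gronwall's lemma. For the supremum in time I take $\sup_t$ before expectation and estimate $\E[\sup_t|M_t|]$ by the Burkholder--Davis--Gundy inequality, reducing the quadratic variation of $M$ to the same right-hand side via (\ref{eq:phi}) and Young. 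After absorbing the resulting $\sup_t\int\eta^2|\Delta_h\bfu|^2$ term into the left-hand side and applying Gronwall, this delivers $\E\big[\sup_t\int_G\eta^2|\Delta_h\bfu(t)|^2\,dx+\int_0^T\int_G\eta^2|\Delta_h\bfF(\nabla\bfu)|^2\,dx\,dt\big]\le C$ with $C$ independent of $h$ (depending on $\eta$, the data and the norms of $\bfu$, but not on $\Lambda$); the initial term is finite since $\bfu_0\in\mathring{W}^{1,2}(G)$. Summing over $e_s$ and passing to the limit $h\to0$, the difference-quotient characterisation of Sobolev regularity then yields $\nabla\bfu\in L^2(\Omega;L^\infty(0,T;L^2(G')))$ and $\bfF(\nabla\bfu)\in L^2(\Omega;L^2(0,T;W^{1,2}(G')))$, which is the asserted estimate. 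Finally, non-degeneracy in (\ref{eq:Sp}) upgrades this to $\nabla^2\bfu\in L^2_{loc}$ (for $p\ge2$ the weight $(1+|\nabla\bfu|)^{p-2}\ge1$), so $\Div\bfS(\nabla\bfu)=D\bfS(\nabla\bfu):\nabla^2\bfu\in L^1_{loc}(\Q)$; integrating by parts in the weak formulation of Definition \ref{def:weakp} removes the gradient from the test function and shows that $\bfu$ is a strong solution in the sense of Definition \ref{def:strong}.

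I expect the main obstacle to be the rigorous application of the It\^o formula to the difference quotients together with the uniform-in-$h$ control of the stochastic terms: unlike the deterministic situation one cannot simply test the equation, and the correction $I_{\mathrm{corr}}$ together with the BDG bound must be balanced against the coercive term so that the constants stay independent of $h$. A secondary point is justifying the It\^o formula at the level of the weak solution; should it not apply directly, I would first prove the estimate for a regularised (added-viscosity or Galerkin) problem and pass to the limit, keeping all bounds uniform.
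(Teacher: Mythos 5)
Your strategy coincides with the paper's: localisation by a cut-off $\eta$, It\^{o}'s formula for the weighted $L^2$-norm of the difference quotient (equivalent to differencing the equation first and using the variational It\^{o} formula), coercivity from (\ref{eq:Sp}) producing $\int\eta^2|\Delta_h^\gamma\bfF(\nabla\bfu)|^2$, the bound $\sum_k|\Delta_h^\gamma g_k(\cdot,\bfu)|^2\le c(1+|\bfu|^2+|\Delta_h^\gamma\bfu|^2)$ from (\ref{eq:phi}), Burkholder--Davis--Gundy plus absorption for the supremum in time, Gronwall, and $h\to0$; your argument for the strong-solution property is also the intended one.

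There is, however, one step that fails as you state it, precisely when $p>2$. The cut-off error is not ``absorbed by Young's inequality at the cost of a lower-order term controlled by $\int|\Delta_h\bfu|^2$''. Using your own growth bound for $\Delta_h^\gamma\bfS(\nabla\bfu)$, Young's inequality leaves
\begin{align*}
c(\delta)\int_0^t\int_{\support{\eta}}\big(1+|\nabla\bfu|+|h\Delta_h^\gamma\nabla\bfu|\big)^{p-2}\,|\Delta_h^\gamma\bfu|^2\,dx\,d\sigma,
\end{align*}
and for $p>2$ this weight is unbounded, so the term is neither dominated by $c\int_G\eta^2|\Delta_h^\gamma\bfu|^2\,dx$ nor of the Gronwall form $\int_0^t g(\sigma)\,\E\big[\int_G\eta^2|\Delta_h^\gamma\bfu(\sigma)|^2\,dx\big]\,d\sigma$: the weight sits inside the spatial integral and the expectation and cannot be pulled out. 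Consequently the Gronwall loop you describe does not close. The paper closes it by a second, different application of Young's inequality, now with exponents $\frac p2$ and $\frac p{p-2}$, which bounds the integrand by $c(1+|\nabla\bfu|+|h\Delta_h^\gamma\nabla\bfu|)^{p}+c|\Delta_h^\gamma\bfu|^{p}$; since $\|\Delta_h^\gamma\bfu\|_{L^p}\le\|\nabla\bfu\|_{L^p}$ and $\|h\Delta_h^\gamma\nabla\bfu\|_{L^p(\support{\eta})}\le c\,\|\nabla\bfu\|_{L^p(G)}$ for $|h|$ small, this term is bounded outright, uniformly in $h$, by $c(\eta)\big(1+\E\big[\int_\Q|\nabla\bfu|^p\,dx\,dt\big]\big)$, which is finite by the weak-solution class. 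In other words, this contribution is estimated once and for all via the a priori $L^p$-bound on $\nabla\bfu$ and never enters Gronwall; only the terms coming from the It\^{o} correction and from the martingale do. (For $p=2$ the weight is constant and your version is fine, which is presumably why the issue is easy to overlook.) With this single correction your proposal reproduces the paper's proof.
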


\begin{proof}

Since $\bfu_0\in L^2(\Omega,\mathcal F_0,\p,\mathring{W}^{1,2}(G))$ and $p\geq2$ the existence of a unique weak solution (in the sense of defintion \ref{def:weakp})
follows by the common variational approach (see for instance \cite{PrRo}) and satisfies
\begin{itemize}
\item $\bfu \in L^2(\Omega,\F,\p;L^\infty(0,T; L^2(G)))$;
\item $\bfu \in L^p(\Omega,\F,\p;L^p(0,T;  \mathring{W}^{1,p}(G)))$.
\end{itemize}
We consider a cut-off function $\eta\in C^\infty_0(G)$ and the difference quotient $\Delta_h^\gamma$ in direction $\gamma\in\left\{1,...,d\right\}$ with $|h|<\frac 12\mathrm{dist}(\support\eta,\partial\Omega)$. We apply It\^{o}'s formula to the function $f(\bfv)=\tfrac{1}{2}\|\eta\Delta_h^\gamma\bfv\|_{L^2(G)}^2$. In appropriate version it is shown in \cite{DHoV}, Prop. A.1. Although only the $L^2$-case is considered there it is straightforward to extend it to the $L^p$-setting.%\footnote{We also refer to  in \cite{KrRo2}[Theorem 3.1] or \cite{Ro}[Chapter 4.2, Theorem 2] for a versions of It\^{o}'s formula in Hilbert spaces in Hilbert spaces.}
This shows
\begin{align*}
\frac{1}{2}\|\eta\Delta_h^\gamma\bfu(t)\|_{L^2(G)}^2&=\frac{1}{2}\|\eta\Delta_h^\gamma\bfu_0\|_{L^2(G)}^2+\int_0^t f'(\bfu)d\bfu_\sigma+\frac{1}{2}\int_0^t f''(\bfu)d\langle\bfu\rangle_\sigma\\
&=\frac{1}{2}\|\eta\Delta_h^\gamma\bfu_0\|_{L^2(G)}^2+\int_G\int_0^t \eta^2\langle\Delta_h^\gamma\bfu\rangle_\sigma\,dx\\&+\frac{1}{2}\int_G\int_0^t\eta^2 \,d\Big\langle\int_0^{\cdot}\Delta_h^\gamma \big(\Phi(\bfu)\,d\bfW\big)\Big\rangle_\sigma\,dx
=:(I)+(II)+(III).
\end{align*}
We consider the three integrals separately.
For the second one we get
\begin{align*}
(II)&=-(II)_1-(II)_2+(II)_3,\\
(II)_1&:=\int_0^t\int_G \eta^2\Delta_h^\gamma\bfS(\nabla\bfu):\Delta_h^\gamma\nabla\bfu\,dx\,d\sigma,\\
(II)_2&:=\int_0^t\int_G \Delta_h^\gamma\bfS(\nabla\bfu):\nabla\eta^2\otimes\Delta_h^\gamma\bfu\,dx\,d\sigma,\\
(II)_3&:=\int_0^t\int_G\eta^2\Delta_h^\gamma\bfu
\cdot\Delta_h^\gamma\Big(\Phi(\bfu)\,d\bfW_\sigma\Big)\,dx.
\end{align*}
Using the assumptions for $\bfS$ in (\ref{eq:Sp}) we get
\begin{align*}
(II)_1&= \int_0^t\int_G \eta^2\int_0^1D\bfS(\nabla\bfu+sh\Delta_h^\gamma\nabla\bfu)\,ds\big
(\Delta_h^\gamma\nabla\bfu,\Delta_h^\gamma\nabla\bfu\big)\,dx\,d\sigma\\
&\geq \lambda\int_0^t\int_G \eta^2\int_0^1(1+|\nabla\bfu+sh\Delta_h^\gamma\nabla\bfu|)^{p-2}\,ds
|\Delta_h^\gamma\nabla\bfu|^2\,dx\,d\sigma\\
&\geq c\int_0^t\int_G \eta^2(1+|\nabla\bfu|+|h\Delta_h^\gamma\nabla\bfu|)^{p-2}
|\Delta_h^\gamma\nabla\bfu|^2\,dx\,d\sigma\\
&\geq c\int_0^t\int_G \eta^2
|\Delta_h^\gamma\bfF(\nabla\bfu)|^2\,dx\,d\sigma.
\end{align*}
In the second last step we used \cite{AF}, Lemma 2.1.
For the second term we obtain by similar arguments
\begin{align*}
(II)_2&\leq \,c\, \int_0^t\int_G \eta\int_0^1(1+|\nabla\bfu+sh\Delta_h^\gamma\nabla\bfu|)^{p-2}\,ds|\Delta_h^\gamma\nabla\bfu||\nabla\eta||\Delta_h^\gamma\bfu|\,dx\,d\sigma\\
&\leq \delta\int_0^t\int_G \eta^2\int_0^1(1+|\nabla\bfu+sh\Delta_h^\gamma\nabla\bfu|)^{p-2}\,ds|\Delta_h^\gamma\nabla\bfu|^2\,dx\,d\sigma\\
&+c(\delta)\int_0^t\int_G \int_0^1(1+|\nabla\bfu+sh\Delta_h^\gamma\nabla\bfu|)^{p-2}\,ds
|\nabla\eta|^2|\Delta_h^\gamma\bfu|\,dx\,d\sigma\\
&\leq c(\delta)\int_0^t\int_{\support{\eta}} (1+|\nabla\bfu|+|h\Delta_h^\gamma\nabla\bfu|)^{p-2}|\Delta_h^\gamma\bfu|^2\,dx\,d\sigma\\
&+\delta \int_0^t\int_G \eta^2
|\Delta_h^\gamma\bfF(\nabla\bfu)|^2\,dx\,d\sigma.
\end{align*}
Here we used Young's inequality for an arbitrary $\delta>0$.
Moreover, we have by (\ref{eq:phi})
\begin{align*}
(III)&=\frac{1}{2}\int_G\int_0^t \eta^2\,d\Big\langle\int_0^{\cdot}\Delta_h^\gamma \big(\Phi(\bfu)\,d\bfW\big)\Big\rangle_\sigma\,dx\\
%&=\frac{1}{2}\sum_{k,l}\int_G\int_0^t\eta^2 \,d\Big\langle\int_0^{\cdot}\Delta_h^\gamma\big(\Phi(\bfu)\bfe_k\big) d\beta_k,\int_0^{\cdot}\Delta_h^\gamma\big(\Phi(\bfu)\bfe_l\big) d\beta_l\Big\rangle_\sigma\,dx\\
&=\frac{1}{2}\sum_{k}\int_G\int_0^t\eta^2 \,d\Big\langle\int_0^{\cdot}\Delta_h^\gamma\big(\Phi(\bfu)\bfe_k\big) d\beta_k\Big\rangle_\sigma\,dx\\
&\leq \frac{1}{2}\sum_{k}\int_G\int_0^t\eta^2 \Big|\bigg(\int_0^1 \nabla_{\bfxi} g_k(\cdot,\bfu+sh\Delta_h^\gamma\bfu)\,ds\bigg)\Delta_h^\gamma\bfu\Big|^2\,d\sigma\,dx\\
&+\frac{1}{2}\sum_{k}\int_G\int_0^t\eta^2 \Big|\int_0^1 \partial_\gamma g_k(x+sh e_\gamma,\bfu)\,ds\Big|^2\,d\sigma\,dx\\
&\leq c\int_G\int_0^t\eta^2|\Delta_h^\gamma\bfu|^2\,d\sigma\,dx+c\int_G\int_0^t\eta^2|\bfu|^2\,d\sigma\,dx.
\end{align*}
Plugging all together and using $\E[(II)_3]=0$ we see
\begin{align*}
\E&\bigg[\int_G\eta^2|\Delta_h^\gamma\bfu(t)|^2\,dx+\int_\Q\eta^2|\Delta_h^\gamma\bfF(\nabla\bfu)|^2\,dx\,dt\bigg]\\
&\leq c\,\E\bigg[\int_G|\nabla\bfu_0|^2\,dx\bigg]+c\int_0^t\E\bigg[\int_G\eta^2\big(|\Delta_h^\gamma\bfu|^2+|\bfu|^2\big)\,dx\bigg]\,d\sigma\\
&+c\,\E\bigg[\int_0^t\int_{\support{\eta}} \eta^2(1+|\nabla\bfu|+|h\Delta_h^\gamma\nabla\bfu|)^{p-2}|\Delta_h^\gamma\bfu|^2\,dx\,d\sigma\bigg].
\end{align*}
By Gronwall's Lemma and since $\bfu_0\in L^2(\Omega,\mathcal F_0,\p; \mathring{W}^{1,2}(G))$ we end up with
\begin{align*}
\E&\bigg[\int_G\eta^2|\Delta_h^\gamma\bfu(t)|^2\,dx+\int_\Q\eta^2|\Delta_h^\gamma\bfF(\nabla\bfu)|^2\,dx\,dt\bigg]\\&\leq c(\eta)\bigg(1+\E\bigg[\int_0^t\int_{\support{\eta}}(1+|\nabla\bfu|+|h\Delta_h^\gamma\nabla\bfu|)^{p-2}|\Delta_h^\gamma\bfu|^2\,dx\,d\sigma\bigg]\bigg).
\end{align*}
Here we also took into account $\bfu\in L^2(\Omega\times Q)$.
If $p>2$ we gain by Young's inequality for the exponents $\frac{p}{2}$ and $\frac{p}{p-2}$\footnote{This step is trivial if $p=2$.}
\begin{align*}
(RHS)&\leq c(\eta)\bigg(1+\int_0^t\int_{G} |\nabla\bfu|^{p}\,dx\,d\sigma+\int_0^t\int_{\support{\eta}} |h\Delta_h^\gamma\nabla\bfu|^{p}\,dx\,d\sigma\bigg)\\
&\leq c(\eta)\bigg(1+\int_0^t\int_{G} |\nabla\bfu|^{p}\,dx\,d\sigma\bigg)
\end{align*}
which is bounded as well (independent of $h$). This means we have shown
\begin{align*}
\E&\bigg[\int_G\eta^2|\Delta_h^\gamma\bfu(t)|^2\,dx+\int_\Q\eta^2|\Delta_h^\gamma\bfF(\nabla\bfu)|^2\,dx\,dt\bigg]\leq c(\eta).
\end{align*}
Now we want to interchange supremum and expectation value. Applying similar arguments as before we obtain
\begin{align}\label{eq:GG}
\begin{aligned}
\E\bigg[\sup_{(0,T)}&\int_G\eta^2|\Delta_h^\gamma\bfu(t)|^2\,dx\bigg]+\E\bigg[\int_\Q\eta^2|\Delta_h^\gamma\bfF(\nabla\bfu)|^2\,dx\,dt\bigg]\\
&\leq c(\eta)+c\,\E\bigg[\sup_{(0,T)}|(II)_3|\bigg].
\end{aligned}
\end{align}
Using the assumptions on $\bfW$ (see (\ref{eq:phi})) we see
\begin{align*}
(II)_3&=\int_G\int_0^t\eta^2\Delta_h^\gamma\bfu\cdot\Delta_h^\gamma\Big(\Phi(\bfu)\bfe_k\,d\beta_k(\sigma)\Big)\,dx\\
&=\sum_k\int_G\int_0^t\eta^2\Delta_h^\gamma\bfu\cdot\Delta_h^\gamma\Big(g_k(\cdot,\bfu)\,d\beta_k(\sigma)\Big)\,dx\\
&=\sum_k\int_G\int_0^t\eta^2\bigg(\int_0^1 \nabla_{\bfxi} g_k(\cdot,\bfu+sh\Delta_h^\gamma\bfu)\,ds\bigg)
(\Delta_h^\gamma\bfu,\Delta_h^\gamma\bfu)\,d\beta_k(\sigma)\,dx\\
&+\sum_k\int_G\int_0^t\eta^2\Delta_h^\gamma\bfu\cdot\bigg(\int_0^1 \partial_\gamma g_k(x+she_\gamma,\bfu)\,ds\bigg)\,d\beta_k(\sigma)\,dx\\
&=\int_G\int_0^t\eta^2\mathcal G^\bfxi
(\Delta_h^\gamma\bfu,\Delta_h^\gamma\bfu)\,d\beta_k(\sigma)\,dx\\
&+\int_G\int_0^t\eta^2\mathcal G^x(\bfu)\cdot\Delta_h^\gamma\bfu\,d\beta_k(\sigma)\,dx\\
&=:(II)_3^1+(II)_3^2
\end{align*}
where we abbreviated
\begin{align*}
\quad \mathcal G^\bfxi&:=\sum_k\mathcal G^\bfxi_k:=\sum_k\bigg(\int_0^1 \nabla_{\bfxi} g_k(\cdot,\bfu+sh\Delta_h^\gamma\bfu)\,ds\bigg),\\
\mathcal G^x(\bfu)&:=\sum_k\mathcal G^x_k(\bfu):=\sum_k\int_0^1 \partial_\gamma g_k(x+she_\gamma,\bfu)\,ds.
\end{align*}
On account of assumption (\ref{eq:phi}) Burkholder-Davis-Gundy inequality and Young's inequality imply for arbitrary $\delta>0$
\begin{align*}
\E\bigg[\sup_{t\in(0,T)}|(II)^1_3|\bigg]&\leq \E\bigg[\sup_{t\in(0,T)}\Big|\int_0^t\sum_k\int_G\eta^2\mathcal G^\bfxi_k
(\Delta_h^\gamma\bfu,\Delta_h^\gamma\bfu)\,dx\,d\beta_k(\sigma)\Big|\bigg]\\
&\leq c\,\E\bigg[\int_0^T\bigg(\int_G \eta^2 \mathcal G^\bfxi
(\Delta_h^\gamma\bfu,\Delta_h^\gamma\bfu)\,dx\bigg)^2\,dt\bigg]^{\frac 12}\\
&\leq c\,\E\bigg[\bigg(\int_0^T\bigg(\int_G\eta^2
|\Delta_h^\gamma\bfu|^2\,dx\bigg)^2\,dt\bigg]^{\frac 12}\\
&\leq \delta\,\E\bigg[\sup_{(0,T)}\int_G\eta^2
|\Delta_h^\gamma\bfu|^2\,dx\bigg]+ c(\delta)\,\E\bigg[\int_\Q\eta^2
|\Delta_h^\gamma\bfu|^2\,dx\,dt\bigg].
\end{align*}
By similar arguments we gain
\begin{align*}
\E\bigg[\sup_{t\in(0,T)}|(II)^2_3|\bigg]
&\leq c\,\E\bigg[\int_0^T\bigg(\int_G \eta^2 \mathcal G^x(\bfu)
\cdot\Delta_h^\gamma\bfu\,dx\bigg)^2\,dt\bigg]^{\frac 12}\\
&\leq c\,\E\bigg[\bigg(\int_0^T\bigg(\int_G\eta^2
|\Delta_h^\gamma\bfu||\bfu|\,dx\bigg)^2\,dt\bigg]^{\frac 12}\\
&\leq c\,\E\bigg[\sup_{(0,T)}\int_G\eta^2
|\bfu|^2\,dx\bigg]+ c\,\E\bigg[\int_\Q\eta^2
|\Delta_h^\gamma\bfu|^2\,dx\,dt\bigg].
\end{align*}
Combining this with (\ref{eq:GG}), using $\bfu\in L^2(\Omega,\F,\p;L^\infty(0,T;L^2(G)))$ and choosing $\delta$ sufficiently small shows
\begin{align}\label{eq:E_sup}
\E\bigg[\sup_{(0,T)}\int_G\eta^2|\Delta_h^\gamma\bfu(t)|^2\,dx\bigg]+\E\bigg[\int_\Q\eta^2|\Delta_h^\gamma\bfF(\nabla\bfu)|^2\,dx\,dt\bigg]
\leq c(\eta).
\end{align}

This finally proves the claim (see \cite{BeFl}, section 3.2, for difference quotients and differentiability in the stochastic setting).
\end{proof}

\section{The subquadratic case: $p<2$}
Throughout this section we study problems of the type (\ref{eq:}) with (\ref{eq:Sp}) and $p\leq2$.
We add the Laplacian to the main part in order to get a problem with quadratic growth. Let $\bfu^\epsilon$ be the solution to
\begin{align}\label{eq:strongsub}
\begin{cases}
d\bfu^\epsilon&=\Div\big(\bfS(\nabla \bfu^\epsilon)\big)dt+\epsilon\Delta\bfu dt+\Phi(\bfu^\epsilon)d\bfW_t,\\
\bfu(0)&=\bfu_0.
\end{cases}
\end{align}
From Theorem \ref{thm:2.3} we know that the solution has the following properties
\begin{itemize}
\item $\bfu^\epsilon \in L^2(\Omega,\F,\p;L^\infty(0,T;L^2(G)))$;
\item $\nabla \bfu^\epsilon \in L^2(\Omega,\F,\p;L^2(0,T;W^{1,2}_{loc}(G)))$.
\end{itemize}
We will prove the following a priori estimates which are uniform in $\epsilon$:
\begin{align}\label{eq:aprisub}
\begin{aligned}
\E\bigg[\sup_{t\in(0,T)}&\int_G |\bfu^\epsilon(t)|^2\,dx+\int_\mathcal{Q} |\nabla \bfu^\epsilon|^p\,dx\,dt+\epsilon\int_\mathcal{Q} |\nabla \bfu^\epsilon|^2\,dx\,dt\bigg]\\&\leq c\bigg(1+\E\bigg[\int_G |\bfu_0|^2\,dx\bigg]\bigg).
\end{aligned}
\end{align}
We apply It\^{o}'s formula to the function $f(\bfv)=\tfrac{1}{2}\|\bfv\|_{L^2(G)}^2$ which shows
\begin{align}
\frac{1}{2}\|\bfu^\epsilon(t)\|_{L^2(G)}^2&=\frac{1}{2}\|\bfu_0\|_{L^2(G)}^2+\int_0^t f'(\bfu^\epsilon)d\bfu^\epsilon_\sigma+\frac{1}{2}\int_0^t f''(\bfu^\epsilon)\,d\langle\bfu^\epsilon\rangle_\sigma\nonumber\\
&=\frac{1}{2}\|\bfu_0\|_{L^2(G)}^2-\epsilon\int_G\int_0^t|\nabla\bfu^\epsilon|^2\,dx\,d\sigma-\int_{G}\int_0^t\bfS(\nabla \bfu^\epsilon):\nabla\bfu^\epsilon\,dx\,d\sigma\nonumber\\
&+\int_G\int_0^t\bfu^\epsilon\cdot\Phi(\bfu^\epsilon)\,d\bfW_\sigma\,dx
+\int_G\int_0^td\Big\langle\int_0^{\cdot}\Phi(\bfu^\epsilon)\, d\bfW\Big\rangle_\sigma\,dx.\label{eq:cuN}
\end{align}
Now we can follow, building expectations and using (\ref{eq:Sp}), that
\begin{align*}
\E\bigg[\int_G &|\bfu^\epsilon(t)|^2\,dx+\epsilon\int_0^t\int_G |\nabla \bfu^\epsilon|^2\,dx\,d\sigma+\int_0^t\int_G |\nabla \bfu^\epsilon|^p\,dx\,d\sigma\\&\leq c\Big(\E\big[1+\|\bfu_0\|^2_{L^2(G)}\big]+\E\big[J_1(t)\big]+\E\big[J_2(t)\big]\Big).
\end{align*}
Here we abbreviated
\begin{align*}
J_1(t)&=\int_G\int_0^t\bfu^\epsilon\cdot\Phi(\bfu^\epsilon)\,d\bfW_\sigma\,dx,\\
J_2(t)&=\int_G\int_0^td\Big\langle\int_0^{\cdot}\Phi(\bfu^\epsilon)\, d\bfW\Big\rangle_\sigma\,dx.
\end{align*}
Using (\ref{eq:phi}) we gain
\begin{align*}
\E[J_2]&= \E\bigg[\int_0^t\sum_{i=1}^\infty\int_G |\Phi(\bfu^\epsilon)\bfe_i|^2\,dx\,d\sigma\bigg]\\
&= \E\bigg[\int_0^t\sum_{i=1}^\infty\int_G |g_i(\cdot,\bfu^\epsilon)|^2\,dx\,d\sigma\bigg]\\
&\leq\,c\, \E\bigg[1+\int_0^t\int_G|\bfu^\epsilon|^2\,dx\,d\sigma\bigg].
\end{align*}
Clearly, we have $\E[J_1]=0$. So
interchanging the time-integral and the expectation value and applying Gronwall's Lemma leads to
\begin{align}\label{eq:4.4}
\begin{aligned}
\sup_{t\in(0,T)}\E&\bigg[\int_G |\bfu^\epsilon(t)|^2\,dx\bigg]+\epsilon\E\bigg[\int_\mathcal{Q} |\nabla \bfu^\epsilon|^p\,dx\,dt\bigg]+\E\bigg[\int_\mathcal{Q} |\nabla \bfu^\epsilon|^p\,dx\,dt\bigg]\\&\leq \,c\,\E\bigg[1+\int_G |\bfu_0|^2\,dx\bigg].
\end{aligned}
\end{align}
A similar observation shows
\begin{align}\label{eq:4.4'}
\begin{aligned}
\E\bigg[\sup_{t\in(0,T)}\int_G |\bfu^\epsilon(t)|^2\,dx\bigg]&\leq \,c\,\E\bigg[1+\int_G |\bfu_0|^2\,dx+\int_0^T\int_G |\bfu^\epsilon|^2\,dx\,d\sigma\bigg]\\&+\,c\,\E\bigg[\sup_{t\in(0,T)}|J_1(t)|\bigg].
\end{aligned}
\end{align}
On account of the Burkholder-Davis-Gundy inequality, (\ref{eq:phi}) and Young's inequality we obtain for arbitrary $\kappa>0$%\marginpar{check the second $\leq$!}
\begin{align*}
\E\bigg[\sup_{t\in(0,T)}|J_1(t)|\bigg]&=\E\bigg[\sup_{t\in(0,T)}\Big|\int_0^t\int_G\bfu^\epsilon\Phi(\bfu^\epsilon)\,dx\,d\bfW_\sigma\Big|\bigg]\\
&=\E\bigg[\sup_{t\in(0,T)}\Big|\int_0^t\sum_i\int_G\bfu^\epsilon\cdot g_i(\cdot,\bfu^\epsilon)\,dx\,d\beta_i(\sigma)\Big|\bigg]\\
&\leq c\,\E\bigg[\int_0^T\sum_i\bigg(\int_G|\bfu^\epsilon|g_i(\cdot,\bfu^\epsilon)\,dx\bigg)^2\,dt\bigg]^{\frac{1}{2}}\\
&\leq c\,\E\bigg[1+\bigg(\int_0^T\bigg(\int_G|\bfu^\epsilon|^2\,dx\bigg)^2\,d\sigma\bigg]^{\frac{1}{2}}\\
&\leq \kappa\E\bigg[\sup_{t\in(0,T)}\int_G|\bfu^\epsilon|^2\,dx\bigg]+c(\kappa)\E\bigg[1+\int_0^T\int_G|\bfu^\epsilon|^2\,dx\,d\sigma\bigg]
\end{align*}
Inserting this in (\ref{eq:4.4'}), choosing $\kappa$ small enough and using (\ref{eq:4.4}) proves (\ref{eq:aprisub}).\\
After passing to a (not relabeled) subsequence we have for a certain function $\bfu$
\begin{equation}
\label{eq:conv_sub}
\begin{aligned}
\bfu^\epsilon&\rightharpoondown \bfu \quad\text{in}\quad L^p(\Omega,\F,\p;L^p(\Q)),\\
\bfu^\epsilon&\rightharpoondown \bfu \quad\text{in}\quad L^2(\Omega,\F,\p;L^r(0,T;L^2(G)))\quad\forall r<\infty,\\
\nabla\bfu^\epsilon&\rightharpoondown \nabla\bfu \quad\text{in}\quad L^p(\Omega,\F,\p;L^p(\Q)),\\
\epsilon\nabla\bfu^\epsilon &\rightarrow 0 \quad\text{in}\quad L^2(\Omega,\F,\p;L^2(\Q)).
\end{aligned}
\end{equation}
\begin{theorem}[Regularity]\label{thm:sub2}
$\left.\right.$\\
Assume (\ref{eq:Sp}) with $p\leq2$, (\ref{eq:phi}) and $\bfu_0\in L^2(\Omega,\mathcal F_0,\p,\mathring{W}^{1,2}(G))$.
Then there is a unique weak solution $\bfu$ to (\ref{eq:}) which is a strong solution and satisfies
\begin{align*}
\E\bigg[\sup_{t\in(0,T)}\int_{G'}|\nabla\bfu(t)|^2\,dx+\int_0^T\int_{G'}|\nabla\bfF(\nabla\bfu)|^2\,dx\,dt\bigg]<\infty
\end{align*}
for all $G'\Subset G$, where $\bfF(\bfxi)=(1+|\bfxi|)^{\frac{p-2}{2}}\bfxi$.
\end{theorem}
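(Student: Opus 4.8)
The plan is to prove the estimate first for the regularised solutions $\bfu^\epsilon$ with constants independent of $\epsilon$, and then to pass to the limit $\epsilon\to0$ exploiting the monotone structure of $\bfS$ and the convergences (\ref{eq:conv_sub}). Since the operator of (\ref{eq:strongsub}) is $\bfS^\epsilon(\bfxi)=\bfS(\bfxi)+\epsilon\bfxi$, the regularised problem has quadratic growth and Theorem \ref{thm:2.3} already provides $\nabla\bfu^\epsilon\in L^2(\Omega;L^2(0,T;W^{1,2}_{loc}(G)))$, so the difference-quotient computation of that proof is legitimate for $\bfu^\epsilon$. I would simply rerun it. Because $D\bfS^\epsilon(\bfxi)(\bfzeta,\bfzeta)=D\bfS(\bfxi)(\bfzeta,\bfzeta)+\epsilon|\bfzeta|^2\geq\lambda(1+|\bfxi|)^{p-2}|\bfzeta|^2$, the added Laplacian only strengthens the coercive term $(II)_1$, which therefore still controls $\int\eta^2|\Delta_h^\gamma\bfF(\nabla\bfu^\epsilon)|^2$ with $\bfF$ formed from the \emph{original} exponent $p$. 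The one step that must be reorganised (since now $p<2$) is the bound on the remainder $\E\int_0^t\int_{\support\eta}(1+|\nabla\bfu^\epsilon|+|h\Delta_h^\gamma\nabla\bfu^\epsilon|)^{p-2}|\Delta_h^\gamma\bfu^\epsilon|^2$: here the weight is bounded by $(1+|\nabla\bfu^\epsilon|)^{p-2}$, and after estimating the difference quotient by the gradient the integrand is dominated by $(1+|\nabla\bfu^\epsilon|)^p$, which is uniformly bounded in $L^1(\Omega\times\Q)$ by (\ref{eq:aprisub}). Letting $h\to0$ I expect to arrive at
\[
\E\bigg[\sup_{t\in(0,T)}\int_{G'}|\nabla\bfu^\epsilon(t)|^2\,dx+\int_0^T\int_{G'}|\nabla\bfF(\nabla\bfu^\epsilon)|^2\,dx\,dt\bigg]\leq c(G')
\]
uniformly in $\epsilon$.

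For the limit I would first identify $\bfu$ as the weak solution. By (\ref{eq:conv_sub}) and the growth of $\bfS$ the fluxes $\bfS(\nabla\bfu^\epsilon)$ are bounded in $L^{p'}(\Omega\times\Q)$, and by (\ref{eq:phi}) the coefficients $\Phi(\bfu^\epsilon)$ are bounded in $L^2(\Omega\times(0,T);L_2(U,\HH))$; passing to a subsequence they converge weakly to some $\overline{\bfS}$ and $\overline{\Phi}$, while the $\epsilon$-term drops out by (\ref{eq:conv_sub}). Since the stochastic integral is a bounded, hence weakly sequentially continuous, linear map of its integrand, one may pass to the limit in Definition \ref{def:weakp} and find that $\bfu$ solves the equation with $\overline{\bfS}$ and $\overline{\Phi}$ in place of $\bfS(\nabla\bfu)$ and $\Phi(\bfu)$. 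The identifications $\overline{\bfS}=\bfS(\nabla\bfu)$ and $\overline{\Phi}=\Phi(\bfu)$ I would obtain from the Minty--Browder trick in its stochastic form, applying It\^o's formula to $\tfrac12\|\bfu^\epsilon\|_{L^2(G)}^2$ and to $\tfrac12\|\bfu\|_{L^2(G)}^2$ and combining the two energy balances with the monotonicity tested against an adapted process, as in the variational theory \cite{PrRo}. Uniqueness follows along the same lines: applying It\^o to the squared $L^2$-norm of the difference of two solutions, the elliptic contribution has the right sign by (\ref{eq:Sp}) and the It\^o correction is controlled by $c\,\|\cdot\|_{L^2(G)}^2$ through (\ref{eq:phi}), so Gronwall's lemma forces the difference to vanish.

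It remains to transfer the uniform bound to $\bfu$. From (\ref{eq:Sp}) and \cite{AF} one has $(\bfS(\bfxi)-\bfS(\bfzeta)):(\bfxi-\bfzeta)\geq c\,|\bfF(\bfxi)-\bfF(\bfzeta)|^2$. Pushing the Minty argument one step further yields $\E\int_\Q(\bfS(\nabla\bfu^\epsilon)-\bfS(\nabla\bfu)):(\nabla\bfu^\epsilon-\nabla\bfu)\to0$, whence $\bfF(\nabla\bfu^\epsilon)\to\bfF(\nabla\bfu)$ strongly in $L^2(\Omega\times\Q)$ and, along a further subsequence, $\nabla\bfu^\epsilon\to\nabla\bfu$ almost everywhere. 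Consequently the weak limit of $\bfF(\nabla\bfu^\epsilon)$ in $L^2(\Omega;L^2(0,T;W^{1,2}(G')))$ is $\bfF(\nabla\bfu)$, so weak lower semicontinuity (together with Fatou's lemma for the $\sup_t$-term) carries the uniform estimate of the first step over to $\bfu$. Finally, since $\bfF(\nabla\bfu)\in L^2(0,T;W^{1,2}(G'))$ controls the weighted second derivatives of $\bfu$ and $|D\bfS(\bfxi)|\leq\Lambda(1+|\bfxi|)^{p-2}$, one gets $\Div\bfS(\nabla\bfu)\in L^1(\Omega,\F,\p;L^1_{loc}(\Q))$, so $\bfu$ is a strong solution in the sense of Definition \ref{def:strong}.

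I expect the main difficulty to lie in the energy convergence underlying the Minty step. In contrast to the deterministic situation, It\^o's formula produces the quadratic-variation correction $\tfrac12\E\int_0^T\sum_k\|g_k(\cdot,\bfu^\epsilon)\|_{L^2(G)}^2\,dt$, and closing the monotonicity inequality requires passing to the limit in this term in tandem with the $\limsup$ of $-\tfrac12\E\|\bfu^\epsilon(T)\|_{L^2(G)}^2$. This in turn needs the weak convergence of $\Phi(\bfu^\epsilon)$ to be upgraded to convergence of its Hilbert--Schmidt norm, for which the uniform gradient bound of the first step and a compactness argument in the spatial variable have to be invoked.
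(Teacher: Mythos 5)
Your architecture coincides with the paper's: regularize by $\epsilon\Delta$, rerun the difference-quotient estimate of Theorem \ref{thm:2.3} for $\bfu^\epsilon$ (legitimate because the regularized problem has quadratic growth, so $\bfu^\epsilon$ has local second weak derivatives), use that $D\bfS+\epsilon\,\mathrm{id}$ still satisfies the lower bound in (\ref{eq:Sp}) with the original exponent $p$, control the remainder by $(1+|\nabla\bfu^\epsilon|)^p$ via $p\leq 2$ and (\ref{eq:aprisub}), then pass to weak limits and identify them by a stochastic Minty argument built on It\^{o}'s formula for $\tfrac12\|\cdot\|_{L^2(G)}^2$; the transfer of the estimate to $\bfu$ through a.e.\ convergence of gradients and lower semicontinuity, and the strong-solution property, also match the paper. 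Two small repairs: for finite $h$ you cannot dominate $(1+|\nabla\bfu^\epsilon|)^{p-2}|\Delta_h^\gamma\bfu^\epsilon|^2$ pointwise by $(1+|\nabla\bfu^\epsilon|)^p$, since the difference quotient is controlled by the gradient on a segment rather than at the same point; one must let $h\to0$ \emph{first} (as the paper does) and only then apply this bound. Also the added Laplacian does not merely strengthen $(II)_1$: it produces its own cross term with $\nabla\eta^2$, which is precisely why the $\epsilon$-weighted quantity $\epsilon\int_\Q|\nabla\bfu^\epsilon|^2\,dx\,dt$ is carried in (\ref{eq:aprisub}).

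The genuine gap is exactly the step you flag at the end, and the fix you sketch for it would fail. You assert $\E\int_\Q(\bfS(\nabla\bfu^\epsilon)-\bfS(\nabla\bfu)):\nabla(\bfu^\epsilon-\bfu)\,dx\,dt\to0$ as the output of the Minty step, and propose to obtain the required convergence of the quadratic-variation term by upgrading the weak convergence of $\Phi(\bfu^\epsilon)$ to convergence of its Hilbert--Schmidt norm through ``a compactness argument in the spatial variable''. This cannot work: the uniform bounds give compactness in $x$ (and at best fractional regularity in $t$), but never in the probability variable $\omega$; pathwise applications of Aubin--Lions produce $\omega$-dependent subsequences, and there is no mechanism to convert boundedness into strong convergence in $L^2(\Omega\times(0,T)\times G)$ --- this is precisely the obstruction that forces either Skorokhod-type stochastic compactness or a monotonicity device in SPDE theory. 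The paper's proof needs no strong convergence as an \emph{input}. Subtracting the two It\^{o} expansions, the difference of the quadratic-variation terms is expanded following \cite{ChCh} as
\begin{align*}
\tilde T=\E\bigg[\int_0^T\|\Phi(\bfu^\epsilon)-\tilde{\Phi}\|^2_{L_2(U,L^2(G))}\,dt\bigg]+\text{cross terms},
\end{align*}
where the cross terms vanish as $\epsilon\to0$ by the weak convergence (\ref{eq:conv_eps}) alone; the remaining square is expanded once more around $\Phi(\bfu)$, its own cross terms again vanish weakly, and what survives is bounded by the Lipschitz estimate of (\ref{eq:phi}): $\E\int_0^T\|\Phi(\bfu^\epsilon)-\Phi(\bfu)\|^2_{L_2(U,L^2(G))}\,dt\leq c\,\E\int_0^T\int_G|\bfu^\epsilon-\bfu|^2\,dx\,dt$. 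Since the left-hand side of the subtracted identity contains $\tfrac12\E\|\bfu^\epsilon(T)-\bfu(T)\|_{L^2(G)}^2$ plus the nonnegative monotonicity term, Gronwall's lemma (in time, after interchanging expectation and integral) forces both to vanish in the limit. Thus the strong convergence $\bfu^\epsilon\to\bfu$ in $L^2(\Omega\times\Q)$ --- and with it $\tilde{\Phi}=\Phi(\bfu)$, via a.e.\ convergence of the gradients and Vitali --- is an \emph{output} of this algebraic argument, not an ingredient to be supplied by compactness. Your alternative pointer to the variational theory of \cite{PrRo} is the right spirit (there the noise term is absorbed by the joint weak-monotonicity condition), but it is not available verbatim here: for $1<p<\frac{2d}{d+2}$ there is no Gelfand triple, which is exactly why the paper carries out this hands-on substitute. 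Without this device your Minty step does not close, and everything downstream of it (a.e.\ convergence of $\nabla\bfu^\epsilon$, identification of $\tilde{\bfS}$, transfer of the uniform estimate) is left hanging.
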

\begin{remark}
In cthe ase $1<p<\frac{2d}{d+2}$ even the existence of a weak solution is not contained in literature. In this
case no Gelfand triple is available and hence the general results for evolutionary SPDEs based on the variational approach (see for instance \cite[Thm. 4.2.4]{PrRo}) do not hold. The uniqueness is again classical and follows from the monotonicity of the coefficients.
\end{remark}
\begin{proof}
From the proof of Theorem \ref{thm:2.3} we can quote (recall (\ref{eq:aprisub}))
\begin{align*}
\E&\bigg[\int_G\eta^2|\Delta_h^\gamma\bfu^\epsilon(t)|^2\,dx+\int_\Q\eta^2|\Delta_h^\gamma\bfF(\nabla\bfu^\epsilon)|^2\,dx\,dt\bigg]\\&\leq c(\eta)\bigg(1+\E\bigg[\int_0^t\int_{\support{\eta}}(1+|\nabla\bfu^\epsilon|+|h\Delta_h^\gamma\nabla\bfu^\epsilon|)^{p-2}|\Delta_h^\gamma\bfu^\epsilon|^2\,dx\,d\sigma\bigg]\bigg)\\
&+c(\eta)\epsilon\,\E\bigg[\int_0^t\int_{\support{\eta}}|\Delta_h^\gamma\bfu^\epsilon|^2\,dx\,d\sigma\bigg]\bigg)
\end{align*}
since the arguments up to this step also work for $p\leq2$. All involved quantities have weak derivatives so we can go to the limit $h\rightarrow0$. This shows by (\ref{eq:aprisub})
\begin{align*}
\E&\bigg[\int_G\eta^2|\nabla\bfu^\epsilon(t)|^2\,dx+\int_\Q\eta^2|\nabla\bfF(\nabla\bfu^\epsilon)|^2\,dx\,dt\bigg]\\&\leq c(\eta)\bigg(1+\E\bigg[\int_0^t\int_{G}(1+|\nabla\bfu^\epsilon|)^{p-2}|\nabla\bfu^\epsilon|^2\,dx\,d\sigma\bigg]\bigg)\\
&+c(\eta)\epsilon\,\E\bigg[\int_0^t\int_{G}|\nabla\bfu^\epsilon|^2\,dx\,d\sigma\bigg]\bigg).\\
&\leq c(\eta)\bigg(1+\E\bigg[\int_0^t\int_{G}(1+|\nabla\bfu^\epsilon|)^{p-2}|\nabla\bfu^\epsilon|^2\,dx\,d\sigma\bigg]\bigg)\\
&\leq c(\eta)\bigg(1+\E\bigg[\int_0^t\int_{G}|\nabla\bfu^\epsilon|^p\,dx\,d\sigma\bigg]\bigg).
\end{align*}
Using similar arguments as in the last section we can interchange supremum and integral and conclude
\begin{align}\label{eq:estpll2}
\begin{aligned}
\E&\bigg[\sup_{t\in(0,T)}\int_G\eta^2|\nabla\bfu^\epsilon(t)|^2\,dx+\int_\Q\eta^2|\nabla\bfF(\nabla\bfu^\epsilon)|^2\,dx\,dt\bigg]\\
&\leq c(\eta)\bigg(1+\E\bigg[\int_\Q|\nabla\bfu^\epsilon|^p\,dx\,d\sigma\bigg]\bigg)\leq c(\eta).
\end{aligned}
\end{align}
Now we have to go to the limit in the equation. We get
\begin{align}\label{eq:conv_eps}
\begin{aligned}
\bfS(\nabla\bfu^\epsilon)&\rightharpoondown: \tilde{\bfS} \quad\text{in}\quad L^{p'}(\Omega,\F,\p;L^{p'}(\Q)),\\
\Phi(\bfu^\epsilon)&\rightharpoondown: \tilde{\Phi} \quad\text{in}\quad L^2(\Omega,\F,\p;L^2(0,T;L_2(U,L^2(G)^D))).
\end{aligned}
\end{align}
One can now pass to the limit in the
equation to obtain the corresponding equation for u with $\tilde{\bfS}$ and $\tilde{\Phi}$ instead of $\bfS(\nabla\bfu)$ and $\Phi(\bfu)$,
respectively. The passage to the limit in the stochastic integral is justified since the
mapping
\begin{align*}
L^2(\Omega,\F,\p;L^2(0,T;L_2(U;L^2(G))))&\rightarrow L^2(\Omega,\F,\p;L^2(0,T;L^2(G))),\\
\bfphi&\mapsto \int_0^t\bfphi\,d\bfW_\sigma,
\end{align*}
is continuous hence weakly continuous.
We have to show that $\tilde{\bfS}=\bfS(\nabla\bfu)$ and $\tilde{\Phi}=\Phi(\bfu)$ hold. Subtracting the formula for $\|\bfu^\epsilon\|^2_{L^2(G)}$ and  $\|\bfu\|^2_{L^2(G)}$ (see (\ref{eq:cuN})) shows
\begin{align*}
&\frac{1}{2}\,\E\bigg[\int_G|\bfu^\epsilon(T)- \bfu(T)|^2\,dx\bigg]\\
+&\E\bigg[\int_G\int_0^T\big(\bfS(\nabla\bfu^\epsilon)-\bfS(\nabla\bfu)\big):\nabla\big(\bfu^\epsilon-\bfu\big)\,dx\,d\sigma\bigg]+\epsilon\,\E\bigg[\int_0^T\int_G|\nabla\bfu^\epsilon|^2\,dx\,d\sigma\bigg]\\
&=\E\bigg[-\int_G\bfu^\epsilon(T)\cdot \bfu(T)\,dx\bigg]\\
&+\E\bigg[\int_G\int_0^T\big(\tilde{\bfS}-\bfS(\nabla\bfu^\epsilon)\big):\nabla\bfu\,dx\,d\sigma-\int_G\int_0^T\bfS(\nabla\bfu):\nabla\big(\bfu^\epsilon-\bfu\big)\,dx\,d\sigma\bigg]\\
&+\E\bigg[\int_G\int_0^T\Big(\bfu^\epsilon\cdot\Phi(\bfu^\epsilon) d\bfW_\sigma
-\bfu\cdot\tilde{\Phi} d\bfW_\sigma\Big)\,dx\bigg]\\
&+\E\bigg[\int_G\int_0^Td\Big(\Big\langle\int_0^{\cdot}\Phi(\bfu^\epsilon) d\bfW\Big\rangle_\sigma
-\Big\langle\int_0^{\cdot}\tilde{\Phi} d\bfW\Big\rangle_\sigma\Big)\,dx\bigg].
\end{align*}
By (\ref{eq:conv_sub}) $\bfu^\epsilon(T)$ is bounded in $L^2(\Omega\times G,\p\otimes\mathcal L^d)$. Which gives $\bfu^\epsilon(T)\rightharpoondown \bfu(T)$ in the same space at least for a subsequence (note that both are weakly continuous in $L^2(\Omega\times G,\p\otimes\mathcal L^d)$ with respect to $t$ which can be shown by the equations).
Letting $\epsilon\rightarrow\infty$ shows for a subsequence using (\ref{eq:conv_sub}) and (\ref{eq:conv_eps})
\begin{align*}
\lim_\epsilon\E&\bigg[\int_G|\bfu^\epsilon(T)- \bfu(T)|^2\,dx+\int_G\int_0^T\big(\bfS(\nabla\bfu^\epsilon)-\bfS(\nabla\bfu)\big):\nabla\big(\bfu^\epsilon-\bfu\big)\,dx\,d\sigma\bigg]\\
&\leq \lim_\epsilon\E\bigg[\int_G\int_0^Td\Big(\Big\langle\int_0^{\cdot}\Phi(\bfu^\epsilon) d\bfW\Big\rangle_\sigma
-\Big\langle\int_0^{\cdot}\tilde{\Phi} d\bfW\Big\rangle_\sigma\Big)\,dx\bigg].
\end{align*}
Following essential ideas of \cite{ChCh} (section 6) the last integral $\tilde{T}$ can be written as
\begin{align*}
\tilde{T}&=\sum_i \E\bigg[\int_G\int_0^T|\Phi(\bfu^\epsilon)\bfe_i|^2\,dx\,d\sigma\bigg]-\sum_i \E\bigg[\int_G\int_0^T|\tilde{\Phi}\bfe_i|^2\,dx\,d\sigma\bigg]\\
&=\E\bigg[\int_0^T\|\Phi(\bfu^\epsilon)\|^2_{L_2(U,L^2(G))}\,dt\bigg]-\E\bigg[\int_0^T\|\tilde{\Phi}\|^2_{L_2(U,L^2(G))}\,dt\bigg]\\
&=\E\bigg[\int_0^T\|\Phi(\bfu^\epsilon)-\tilde{\Phi}\|^2_{L_2(U,L^2(G))}\,dt\bigg]+2\,\E\bigg[\int_0^T\Big\langle\Phi(\bfu^\epsilon),\tilde{\Phi}\Big\rangle_{L_2(U,L^2(G))}\,dt\bigg]\\&-2\,\E\bigg[\int_0^T\|\tilde{\Phi}\|^2_{L_2(U,L^2(G))}\,dt\bigg].
\end{align*}
On account of (\ref{eq:conv_eps}) for $\epsilon\rightarrow 0$ we only have to consider the first term which can be written as
\begin{align*}
\E\bigg[\int_0^T\|\Phi(\bfu^\epsilon)-\tilde{\Phi}\|^2_{L_2(U,L^2(G))}\,dt\bigg]&=\E\bigg[\int_0^T\|\Phi(\bfu^\epsilon)-\Phi(\bfu)\|^2_{L_2(U,L^2(G))}\,dt\bigg]\\&-\E\bigg[\int_0^T\|\Phi(\bfu)-\tilde{\Phi}\|^2_{L_2(U,L^2(G))}\,dt\bigg]\\
&+2\,\E\bigg[\int_0^T\Big\langle\Phi(\bfu^\epsilon)-\tilde{\Phi},\Phi(\bfu)-\tilde{\Phi}\Big\rangle_{L_2(U,L^2(G))}\,dt\bigg]
\end{align*}
Using again (\ref{eq:conv_eps}) and also (\ref{eq:phi}) implies
\begin{align*}
\lim_\epsilon \tilde{T}&\leq \lim_\epsilon\E\bigg[\int_0^T\|\Phi(\bfu^\epsilon)-\Phi(\bfu)\|^2_{L_2(U,L^2(G))}\,dt\bigg]\\
&\leq \,c\,\lim_\epsilon\E\bigg[\int_0^T\int_G|\bfu^\epsilon-\bfu|^2\,dx\,dt\bigg].
\end{align*}
We finally gain on account of Grownwall's lemma after interchanging expectation and integral
\begin{align*}
\E\bigg[\int_G\int_0^T\big(\bfS(\nabla\bfu^\epsilon)-\bfS(\nabla\bfu)\big):\nabla\big(\bfu^\epsilon-\bfu\big)\,dx\,d\sigma\bigg]=0.
\end{align*}
From this we deduce, by monotonicity of $\bfS$ that
\begin{align*}
\nabla\bfu^\epsilon\longrightarrow \nabla\bfu\quad\mathbb P\otimes \mathcal L^{d+1}-a.e.
\end{align*}
This means we have shown $\tilde{\bfS}=\bfS(\nabla\bfu)$.
Now we combine the uniform $L^p$-estimates for $\nabla\bfu^\epsilon$ with Vitali's Theorem to get
\begin{align}\label{eq:compactnablaN}
\nabla\bfu^\epsilon\longrightarrow \nabla\bfu\quad\text{in}\quad L^q(\Omega\times(0,T)\times G;\p\otimes\mathcal L^{d+1})\quad\text{for all } q<p.
\end{align}
Of course this also means compactness of $\bfu^\epsilon$ in the same space (we have zero traces). Therefore, we gain $\tilde{\Phi}=\Phi(\bfu)$.
Now we can pass to the limit in the approximated equation and finish the proof of Theorem \ref{thm:sub2}.
\end{proof}

\section{Uhlenbeck-structure}
In order to get better results we assume Uhlenbeck structure for the non-linear tensor $\bfS$. If $D\geq2$ we suppose
\begin{align}\label{eq:nu}
\bfS(\bfxi)=\nu(|\bfxi|)\bfxi
\end{align}
for a $C^1$-function $\nu:[0,\infty)\rightarrow[0,\infty)$.
%Here we focus on the non-degenerated situation. However degenerated problems can be achieved by approximation with non-degenerated ones and going to the limit as explained in the last section.
\begin{theorem}[Higher integrability]\label{thm:4.3} 
$\left.\right.$\\
Assume (\ref{eq:Sp}), (\ref{eq:nu}), (\ref{eq:phi}) and $\bfu_0\in L^q(\Omega,\F_0,\p,\mathring{W}^{1,q}(G))$ for all $q<\infty$. 
If $p>2-\frac{4}{d}$ then the solution $\bfu$ to (\ref{eq:}) satisfies
\begin{align*}
\E\bigg[\int_0^T\int_{G'}|\nabla\bfu|^q\,dx\,dt\bigg]<\infty
\end{align*}
for all $G'\Subset G$ and all $q<\infty$.
\end{theorem}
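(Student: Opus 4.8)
The plan is to run a stochastic Moser iteration on the excess function $w:=(1+|\nabla\bfu|^2)^{1/2}$, starting from the natural regularity already secured in Theorem \ref{thm:2.3} and Theorem \ref{thm:sub2}: these give $\nabla\bfu\in L^2(\Omega;L^\infty(0,T;L^2_{loc}))$ and $\nabla\bfF(\nabla\bfu)\in L^2(\Omega;L^2_{loc}(\Q))$, so that, since $|\bfF(\bfxi)|\approx(1+|\bfxi|)^{p/2}$, the base level reads $w\in L^\infty_tL^2_x$ together with $w^{p/2}\in L^2_t\mathring W^{1,2}_{loc}$ (in expectation). Because of the Uhlenbeck structure (\ref{eq:nu}), differentiating the system in a direction $\gamma$ formally yields $d(\partial_\gamma\bfu)=\Div\big(D\bfS(\nabla\bfu)\nabla\partial_\gamma\bfu\big)\,dt+\partial_\gamma(\Phi(\bfu))\,d\bfW_t$; the rigorous substitute, exactly as in the proof of Theorem \ref{thm:2.3}, is to carry out everything at the level of the difference quotients $\Delta_h^\gamma\bfu$ and to pass to the limit $h\to0$ using the regularity available at the current iteration level. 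For $p<2$ the scheme is applied to the regularised solutions $\bfu^\epsilon$ of (\ref{eq:strongsub}) with all constants uniform in $\epsilon$, and the conclusion is transferred to $\bfu$ by the compactness established in the proof of Theorem \ref{thm:sub2}.

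For the core per-level estimate I fix $\alpha\geq1$ and a cut-off $\eta$ and apply It\^o's formula (the version of \cite{DHoV}, Prop.~A.1, extended as in Theorem \ref{thm:2.3}) to $t\mapsto\tfrac12\int_G\eta^2w^{2\alpha}\,dx$. Summing over $\gamma$ and using (\ref{eq:Sp}) with (\ref{eq:nu}), the drift produces the coercive term
\[
c\int_0^t\int_G\eta^2|\nabla(w^\kappa)|^2\,dx\,d\sigma,\qquad\kappa:=\alpha+\tfrac{p-2}{2},
\]
plus boundary terms carrying $\nabla\eta$, which Young's inequality absorbs into the coercive term up to lower-order contributions supported in $\{\nabla\eta\neq0\}$. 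By (\ref{eq:phi}) the It\^o correction of the noise is bounded by $c\,\alpha^2\int\eta^2w^{2\alpha-2}(1+|\bfu|^2+|\nabla\bfu|^2)$, i.e.\ a term of the same spatial power $w^{2\alpha}$ but \emph{without} derivative gain, finite by the integrability reached at the previous level. The genuinely stochastic term is a martingale, and to keep the supremum inside the expectation I estimate $\E[\sup_t|\cdot|]$ by Burkholder--Davis--Gundy and Young, absorbing a small multiple of $\E[\sup_t\int\eta^2w^{2\alpha}]$ into the left-hand side, exactly as for $(II)_3$ in the proof of Theorem \ref{thm:2.3}. This produces, with constants polynomial in $\alpha$,
\begin{align*}
\E\bigg[\sup_{t}\int_G\eta^2w^{2\alpha}\,dx+\int_\Q\eta^2|\nabla(w^\kappa)|^2\,dx\,dt\bigg]
&\leq c\,\alpha^2\,\E\bigg[\int_G\eta^2(1+|\nabla\bfu_0|^2)^{\alpha}\,dx\bigg]\\
&\quad+c\,\alpha^2\,\E\bigg[\int_\Q(\eta^2+|\nabla\eta|^2)(1+|\bfu|^2)\,w^{2\alpha}\,dx\,dt\bigg].
\end{align*}

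The gain then comes from interpolation. Writing $g:=w^\kappa$, the left-hand side controls $g$ in $L^2(0,T;\mathring W^{1,2})$ together with $g^{2\alpha/\kappa}=w^{2\alpha}$ in $L^\infty(0,T;L^1)$, and the parabolic Gagliardo--Nirenberg inequality upgrades this to $w\in L^{2\alpha(1+2/d)+(p-2)}$ over the smaller cut-off set. Thus a step replaces the space-time exponent $q=2\alpha$ by $q_{n+1}=\chi q_n+(p-2)$ with $\chi:=1+\tfrac2d>1$, the gain being strict exactly when $\alpha>\tfrac{d(2-p)}{4}$. At the starting level $\alpha_0=1$ — the only level fed solely by the base regularity of Theorem \ref{thm:2.3}/\ref{thm:sub2} — this is precisely $p>2-\tfrac4d$, our hypothesis; for $\alpha>1$ it is automatic. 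Equivalently, the affine recursion has fixed point $\tfrac{d(2-p)}{2}$, which lies below $q_0=2$ iff $p>2-\tfrac4d$, so that $q_n\uparrow\infty$. Choosing cut-offs $\eta_n$ with $\{\eta_n=1\}\downarrow G'$ and using $\bfu_0\in\mathring W^{1,q}$ for all $q$ together with $\bfu\in L^\infty_tL^2$ to bound the right-hand side at each level by the previous one, I iterate and let $n\to\infty$ to obtain $\E[\int_0^T\int_{G'}|\nabla\bfu|^q\,dx\,dt]<\infty$ for every $q<\infty$.

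The main obstacle is the bookkeeping of the $\alpha$-dependence through the stochastic terms. Unlike the deterministic case, both the It\^o correction and the Burkholder--Davis--Gundy/Young absorption of the martingale inject constants growing in $\alpha$; the essential point is that this growth is only polynomial ($\sim\alpha^2$), so that the product $\prod_n(c\,\alpha_n^2)^{1/q_n}$ converges because $q_n$ grows geometrically. In particular one must resist treating the time-integrated $w^{2\alpha}$ correction by Gronwall's lemma, which would produce an $e^{c\alpha^2}$ factor and destroy the iteration; instead it has to be absorbed using the integrability obtained at the previous step. A secondary technical point is the rigorous justification of It\^o's formula for the nonlinear functional of the gradient, which — as in Theorem \ref{thm:2.3} — is carried out at the level of difference quotients and closed by the $W^{1,2}_{loc}$-regularity together with the higher integrability available inductively.
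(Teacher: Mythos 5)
Your overall scheme --- It\^o's formula applied to $\int_G\eta^2w^{2\alpha}\,dx$, coercivity from (\ref{eq:Sp}), parabolic Gagliardo--Nirenberg interpolation, and the recursion $q_{n+1}=(1+\tfrac2d)q_n+(p-2)$ whose fixed point lies below the starting exponent precisely when $p>2-\tfrac4d$ --- is the same Moser iteration the paper carries out (its exponent $\omega(\alpha)$ is your recursion in a different normalization). But there is a genuine gap at the one step where the structural hypothesis (\ref{eq:nu}) is indispensable. When the drift is integrated by parts against $\eta^2w^{2\alpha-2}\partial_\gamma\bfu$, \emph{three} types of terms appear: the coercive one, the $\nabla\eta$-terms, and a term in which the gradient falls on the weight,
\begin{align*}
\int_0^t\int_G\eta^2\,D\bfS(\nabla\bfu)\big(\partial_\gamma\nabla\bfu,\,\nabla\big(w^{2\alpha-2}\big)\otimes\partial_\gamma\bfu\big)\,dx\,d\sigma .
\end{align*}
This is not a boundary term and cannot be absorbed by Young's inequality: schematically it is of size $c\,\alpha\int\eta^2w^{2\alpha-2}(1+|\nabla\bfu|)^{p-2}|\nabla^2\bfu|^2$, i.e.\ $\alpha$ times the coercive term, so absorption fails exactly in the regime of large $\alpha$ that the iteration is about. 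The paper's proof devotes a computation to precisely this term ($(II)_\alpha^3$ there): using $\bfS(\bfxi)=\nu(|\bfxi|)\bfxi$ and $h_L''(s)\geq h_L'(s)/s$ (Lemma \ref{lems41} c)) it rewrites it as a quadratic form of $D\bfS$ evaluated at $e_\gamma\otimes\nabla|\nabla\bfu|^2$ and concludes it is \emph{nonnegative}, so it may simply be discarded. Your text asserts that the drift produces ``the coercive term plus boundary terms carrying $\nabla\eta$'', citing (\ref{eq:nu}) only in passing (and once where it is not even needed, namely for differentiating the equation); without the sign argument that assertion is false for general $\bfS$ satisfying (\ref{eq:Sp}), and the proof collapses at this point.

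A second gap concerns the noise. Since $\nabla_xg_k$ grows linearly in $\bfu$ (third condition in (\ref{eq:phi})), your It\^o correction and your final right-hand side contain the mixed term $\E\big[\int_\Q(\eta^2+|\nabla\eta|^2)(1+|\bfu|^2)w^{2\alpha}\,dx\,dt\big]$. You propose to bound it by ``the previous level'' together with $\bfu\in L^\infty_tL^2_x$; this cannot work: the previous level only gives $w\in L^{q_n}$ in space-time, and no H\"older or Young splitting of $|\bfu|^2w^{q_n}$ closes with only two moments of $\bfu$. The paper needs, and proves, an extra ingredient: Lemma \ref{lems:alphau}, i.e.\ $\E\big[\sup_t\int_G|\bfu|^q\,dx\big]<\infty$ for every $q<\infty$, obtained by a separate It\^o/iteration argument applied to $h_L(|\bfu|)$, combined with the decoupling inequality $\tfrac{h_L'(s)}{s}t^2\leq c\big(1+h_L(s)+h_L(t)t^2\big)$ (Lemma \ref{lems41} d)) which separates the $\bfu$-powers from the $\nabla\bfu$-powers. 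Two smaller points. First, your concern that Gronwall's lemma would ``destroy the iteration'' through a factor $e^{c\alpha^2}$ is unfounded: the theorem claims finiteness for each fixed $q$, so any finite constant $c(\alpha,\eta)$ per level is harmless, and the paper does apply Gronwall at every level; constant-tracking would matter only for an $L^\infty$-bound, which, as the paper's closing remark points out, Moser iteration cannot deliver here in any case. Second, the rigorous justification is not carried out by difference quotients at this stage: the paper replaces the weight by the quadratically growing truncation $h_L$ of (\ref{eq:hL}), for which It\^o's formula is licit given Theorems \ref{thm:2.3} and \ref{thm:sub2}, and passes $L\to\infty$ by Fatou under the inductive integrability assumption; difference quotients alone do not cure the lack of a priori integrability of $w^{2\alpha}$.
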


Since we now assume higher moments for the initial data we gain higher moments for the solution as well.
\begin{lemma}\label{lems:hm} 
$\left.\right.$\\
Under the assumptions of Theorem \ref{thm:4.3} we have
\begin{align*}
\E\bigg[\sup_{t\in(0,T)}\int_{G}|\bfu(t)|^2\,dx+\int_0^T\int_{G}|\nabla\bfu|^p\,dx\,dt\bigg]^q<\infty
\end{align*}
for all $q<\infty$.
\end{lemma}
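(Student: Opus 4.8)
The plan is to raise the basic $L^2$-energy estimate behind (\ref{eq:aprisub}) to the $q$-th power, working directly with the strong solution $\bfu$ and its energy identity (the analogue of (\ref{eq:cuN}) without the $\epsilon$-term). Since $(a+b)^q\leq 2^{q-1}(a^q+b^q)$ for $a,b\geq0$, it suffices to bound $\E[\sup_t\|\bfu(t)\|_{L^2(G)}^{2q}]$ and $\E[(\int_\Q|\nabla\bfu|^p\,dx\,dt)^q]$ separately. Throughout I would localise by the stopping times $\tau_N:=\inf\{t:\|\bfu(t)\|_{L^2(G)}^2>N\}\wedge T$, run every estimate up to $\tau_N$ so that all stochastic integrals are genuine (not merely local) martingales, and only let $N\to\infty$ via monotone convergence at the end.

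Set $Y(t):=\|\bfu(t)\|_{L^2(G)}^2$. The It\^o formula used to obtain (\ref{eq:cuN}) shows that $Y$ is a real semimartingale with drift $-2\int_G\bfS(\nabla\bfu):\nabla\bfu\,dx+\sum_k\int_G|g_k(\cdot,\bfu)|^2\,dx$ and martingale part $2J_1$, whose quadratic variation is $d\langle J_1\rangle=\sum_k(\int_G\bfu\cdot g_k(\cdot,\bfu)\,dx)^2\,dt$. Applying It\^o once more to $s\mapsto s^q$ gives $d(Y^q)=qY^{q-1}\,dY+2q(q-1)Y^{q-2}\,d\langle J_1\rangle$. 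Here the dissipative part of $dY$ has a good sign and is discarded, while (\ref{eq:phi}) with Cauchy--Schwarz yields $\sum_k\int_G|g_k|^2\,dx\leq c(1+Y)$ and $d\langle J_1\rangle\leq cY(1+Y)\,dt$; consequently both $qY^{q-1}\,dY$ and the correction term are bounded by $c(1+Y^q)\,dt$ up to a martingale increment. Taking expectations up to $\tau_N$ and using Gronwall's Lemma gives $\sup_t\E[Y(t\wedge\tau_N)^q]\leq c(1+\E[\|\bfu_0\|_{L^2(G)}^{2q}])$, which is finite because $\mathring{W}^{1,2q}(G)\hookrightarrow L^2(G)$ and $\bfu_0\in L^{2q}(\Omega;\mathring{W}^{1,2q}(G))$ for every $q<\infty$.

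To interchange supremum and expectation I would instead retain the martingale term and estimate $\E[\sup_{t\leq\tau_N}|\int_0^t qY^{q-1}\,dJ_1|]$ by the Burkholder--Davis--Gundy inequality by $c\,\E[(\int_0^{\tau_N}Y^{2q-1}(1+Y)\,dt)^{1/2}]$. Factoring out $(\sup_{t\leq\tau_N}Y^q)^{1/2}$ and using Young's inequality produces a term $\kappa\,\E[\sup_{t\leq\tau_N}Y^q]$ that is absorbed on the left for small $\kappa$, together with a term controlled by $\int_0^T\E[\sup_{s\leq t}Y(s\wedge\tau_N)^q]\,dt$; Gronwall's Lemma and $N\to\infty$ then give $\E[\sup_t\|\bfu(t)\|_{L^2(G)}^{2q}]\leq c$.

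For the gradient term I would evaluate the energy identity at $t=T$ and use the coercivity $\int_G\bfS(\nabla\bfu):\nabla\bfu\,dx\geq c\int_G|\nabla\bfu|^p\,dx-c$ (which follows from (\ref{eq:Sp}) and (\ref{eq:nu}), exactly as in the derivation of (\ref{eq:aprisub})) to obtain the pathwise bound $\int_\Q|\nabla\bfu|^p\,dx\,dt\leq c\|\bfu_0\|_{L^2(G)}^2+c\sup_t|J_1(t)|+cJ_2(T)+c$. Raising to the $q$-th power and taking expectations, the term $J_2(T)\leq c(1+\sup_t\|\bfu\|_{L^2(G)}^2)$ and, via Burkholder--Davis--Gundy as above, also $\E[(\sup_t|J_1|)^q]$ are both controlled by $\E[\sup_t\|\bfu\|_{L^2(G)}^{2q}]$, which is finite by the previous step; together with $\E[\|\bfu_0\|_{L^2(G)}^{2q}]<\infty$ this gives $\E[(\int_\Q|\nabla\bfu|^p\,dx\,dt)^q]\leq c$ and closes the proof. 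The main obstacle is purely technical: one must keep the second It\^o step and the supremum--expectation interchange localised by $\tau_N$ so that the stochastic integrals have vanishing expectation and Burkholder--Davis--Gundy applies, and only afterwards remove the localisation; the nonlinear and possibly degenerate operator $\bfS$ enters solely through its sign and $p$-growth and causes no additional trouble.
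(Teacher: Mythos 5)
Your proof is correct, and its skeleton --- It\^{o}'s formula for the $L^2$-energy, coercivity of $\bfS$, the linear growth assumption (\ref{eq:phi}), the Burkholder--Davis--Gundy inequality, absorption via Young, and Gronwall's Lemma --- is the same as in the paper. The implementation differs in two respects worth noting. First, to produce $q$-th moments the paper simply raises the energy identity (the analogue of (\ref{eq:cuN}) without the $\epsilon$-term) to the $q$-th power and applies Burkholder--Davis--Gundy at the level of $q$-th moments, $\E\big[(\sup_t|J_1|)^q\big]\leq c\,\E\big[\langle J_1\rangle_T^{q/2}\big]$, conceding only in a closing remark that these calculations ``are not well-defined a priori'' and should be justified by a quadratic approximation of $z\mapsto z^q$; you instead apply It\^{o}'s formula a second time, to $Y^q$ with $Y=\|\bfu\|_{L^2(G)}^2$, and localize with the stopping times $\tau_N$, so that Burkholder--Davis--Gundy is only needed at exponent one. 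Your route is in effect a rigorous realization of the paper's closing remark: the stopping times play the role of the approximation of $z\mapsto z^q$, and they are what make the zero-expectation and absorption steps legitimate, since the absorbed quantity $\E\big[\sup_{t\leq\tau_N}Y^q\big]$ is finite by construction. Second, you split the claim via $(a+b)^q\leq 2^{q-1}(a^q+b^q)$ into the sup-norm bound and the gradient bound, deriving the latter from the pathwise energy identity at $t=T$ once the former is established, whereas the paper keeps both terms on the left-hand side of a single Gronwall argument. These are differences of bookkeeping rather than of substance; your version is slightly longer but more careful about a priori integrability, which is exactly the point the paper's displayed computation glosses over.
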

\begin{proof}
Due to the regularity results from Theorem \ref{thm:2.3} and \ref{thm:sub2} we have a strong solution and It\^{o}'s formula can be directly applied to the funtion $f(\bfv)=\frac{1}{2}\|\bfv\|_{L^2(G)}^2$. Using the growth condition on $\bfS$ from (\ref{eq:Sp}), taking the supremum and the $q$-th power of both sides of the equation and applying expectations shows
\begin{align*}
\E\bigg[\sup_{(0,T)}\int_G &|\bfu(t)|^2\,dx+\int_0^T\int_G |\nabla \bfu|^p\,dx\,d\sigma\bigg]^q\\&\leq c\bigg(\E\bigg[1+\int_G|\bfu_0|^{2q}\,dx\bigg]+\E\bigg[\sup_{(0,T)}|J_1(t)|\bigg]^q+\E\bigg[\sup_{(0,T)}|J_2(t)|\bigg]^q\bigg),\\
J_1(t)&=\int_G\int_0^t\bfu\cdot\Phi(\bfu)\,d\bfW_\sigma\,dx,\\
J_2(t)&=\int_G\int_0^td\Big\langle\int_0^\cdot\Phi(\bfu) \,d\bfW\Big\rangle_\sigma\,dx.
\end{align*}
Using (\ref{eq:phi}) we gain
\begin{align*}
\E\bigg[\sup_{t\in(0,T)}|J_2(t)|\bigg]^q&= \E\bigg[\sup_{t\in(0,T)}\int_0^t\sum_{i=1}^\infty\int_G |\Phi(\bfu)\bfe_i|^2\,dx\,d\sigma\bigg]^q\\
&\leq  \E\bigg[\int_0^T\sum_{i=1}^\infty\int_G |g_i(\cdot,\bfu)|^2\,dx\,d\sigma\bigg]^q\\
&\leq \,c\,\E\bigg[1+\int_0^T\int_G|\bfu|^2\,dx\,d\sigma\bigg]^q.
\end{align*}
On account of the Burkholder-Davis-Gundi inequality, (\ref{eq:phi}) and Young's inequality we obtain for arbitrary $\epsilon>0$%\marginpar{check the second $\leq$!}
\begin{align*}
\E\bigg[\sup_{t\in(0,T)}|J_1(t)|\bigg]^q
&=\E\bigg[\sup_{t\in(0,T)}\Big|\sum_i\int_0^t\int_G\bfu\cdot g_i(\cdot,\bfu)\,dx\,d\beta_i(\sigma)\Big|\bigg]^q\\
&\leq c\,\E\bigg[\int_0^T\sum_i\bigg(\int_G\bfu\cdot g_i(\cdot,\bfu)\,dx\bigg)^2\,dt\bigg]^{\frac{q}{2}}\\
&\leq c\,\E\bigg[1+\bigg(\int_0^T\bigg(\int_G|\bfu|^2\,dx\bigg)^2\,d\sigma\bigg]^{\frac{q}{2}}\\
&\leq \epsilon\E\bigg[\sup_{t\in(0,T)}\int_G|\bfu|^2\,dx\bigg]^q+c(\epsilon)\E\bigg[\int_0^T\int_G|\bfu|^2\,dx\,d\sigma\bigg]^q.
\end{align*}
Choosing $\epsilon$ small enough and using Gronwall's lemma proves the claim.\\
Since the calculations above are not well-defined a priori one can work with a quadratic approximation for the function $z\mapsto z^q$.
\end{proof}

Before we begin with the proof of Theorem \ref{thm:4.3} which is based on the Moser iteration (see \cite{GT} for a nice presentation in the easier elliptic case) we need some preparations. The basic idea is estimating higher powers of $|\nabla\bfu|$ by lower powers and iterate this. Therefore, we define $$h(s):=\int_0^s(1+\theta)^{\alpha}\theta\,d\theta,\quad \alpha\geq0,$$ 
which behaves like $s^{\alpha+2}$ for large $s$.
Unfortunately we cannot work directly with $h$, we need an approximation $h_L$ which grows quadratically and converges to $h$. We follow the approach in \cite{BiFu} and define for $L\gg1$
\begin{align}\label{eq:hL}
\begin{aligned}
h_{L}(s)&:=\int_{0}^ s\tau g_{L}(\tau)\,d\tau,\\
g_{L}(\tau)&:=g(0)+\int_{0}^\tau\psi(\theta)g'(\theta)\,d\theta,\\
g(\theta)&:=\frac{h'(\theta)}{\theta}.
\end{aligned}
\end{align}
Here $\psi\in C^1([0,\infty))$ denotes a cut-off function with the properties $0\leq\psi\leq1$, $\psi'\leq0$, $|\psi'|\leq c/L$, $\psi\equiv1$ on $[0,3L/2]$ and $\psi\equiv0$ on $[2L,\infty)$. For the function $h_L$ we obtain the following properties (see \cite{Br}, Lemma 2.1, and \cite{Br2}, section 2)
\begin{lemma}
\label{lems41}
For the sequence $(h_{L})$ we have:
\begin{enumerate}
\item $h_{L}\in C^2[0,\infty)$, $h_{L}(s)=h(s)$ for all $t\leq 3L/2$ and
\begin{align*}
\lim_{L\rightarrow\infty}h_{L}(s)=h(s)\,\,\text{ for all }s\geq0;\qquad \qquad\qquad
\end{align*}
\item $h_{L}\leq h$, $g_{L}\leq g$ and $h_{L}''\leq c(L)$ on $[0,\infty)$;
\item It holds
\begin{align*}
\,\frac{h_{L}'(s)}{s}\leq h_{L}''(s)\leq c(\alpha+1)\,\frac{h_{L}'(s)}{s}
\end{align*}
and $h_{L}'(s)s\leq ch_L(s)$ uniformly in $L$.
\item We have for all $s,t\geq0$ uniformly in $L$
\begin{align*}
\frac{h_L'(s)}{s}t^2\leq \,c(\alpha)\big(1+h_L(s)+h_L(t)t^2\big).
\end{align*}
\end{enumerate}
\end{lemma}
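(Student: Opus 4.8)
The plan is to reduce each claim to explicit first-order information about $g_L$ and $h_L$ and then to lean on the single scale-invariant inequality $\theta g'(\theta)\le\alpha g(\theta)$. First I would record what the definitions in (\ref{eq:hL}) give: from $h'(\theta)=(1+\theta)^\alpha\theta$ one reads off $g(\theta)=(1+\theta)^\alpha$, $g'(\theta)=\alpha(1+\theta)^{\alpha-1}$, and differentiating the outer integral yields
\[
h_L'(s)=s\,g_L(s),\qquad h_L''(s)=g_L(s)+s\,\psi(s)\,g'(s),\qquad g_L'=\psi g'\ge0 .
\]
Claim (a) is then almost immediate: since $\psi\equiv1$ on $[0,3L/2]$ the definition of $g_L$ collapses there to $g_L=g(0)+\int_0^{\cdot}g'=g$, so $h_L'=h'$ and hence $h_L=h$ on $[0,3L/2]$; as every fixed $s$ lies in this range once $L>\tfrac23 s$, the pointwise limit follows. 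The $C^2$-regularity comes for free because $g_L$ is an indefinite integral of the continuous function $\psi g'$, so $g_L\in C^1$ and $h_L'(s)=s g_L(s)\in C^1$.

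For (b) I would argue purely by monotonicity. The bounds $0\le\psi\le1$ and $g'\ge0$ give $g_L(\tau)=1+\int_0^\tau\psi g'\le1+\int_0^\tau g'=g(\tau)$, and integrating $h_L'=s g_L\le s g=h'$ produces $h_L\le h$. Since $g_L$ is nondecreasing and becomes constant beyond $s=2L$, it is bounded by $g_L(2L)\le g(2L)$, while the remaining term $s\psi(s)g'(s)$ is supported in $[0,2L]$ and there dominated by $2L\,\alpha(1+2L)^{\alpha-1}$; together this gives $h_L''\le c(L)$.

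Claim (c) is the core, and the device that makes the upper bound uniform in $L$ is an integration by parts. Using $\psi(0)=g(0)=1$ one has
\[
g_L(s)=1+\int_0^s\psi g'=\psi(s)g(s)-\int_0^s\psi'(\theta)g(\theta)\,d\theta=\psi(s)g(s)+\int_0^s|\psi'(\theta)|g(\theta)\,d\theta\ge\psi(s)g(s),
\]
because $\psi'\le0$. The lower bound in (c) is trivial from $h_L''=g_L+s\psi g'\ge g_L=h_L'/s$. For the upper bound I combine $s g'(s)=\alpha\tfrac{s}{1+s}(1+s)^\alpha\le\alpha g(s)$ with the identity above to get $s\psi g'\le\alpha\psi g\le\alpha g_L$, hence $h_L''\le(\alpha+1)g_L=(\alpha+1)h_L'/s$ \emph{on all of} $[0,\infty)$ and uniformly in $L$. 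From this the last inequality of (c) follows by integration: $\frac{d}{ds}\big(h_L'(s)s\big)=h_L''s+h_L'\le(\alpha+2)h_L'=(\alpha+2)\frac{d}{ds}h_L$, so $h_L'(s)s\le(\alpha+2)h_L(s)$. Finally (d), written as $g_L(s)t^2\le c(1+h_L(s)+h_L(t)t^2)$, is a case split on $t\le s$ versus $t>s$: in the first case $g_L(s)t^2\le g_L(s)s^2=h_L'(s)s\le(\alpha+2)h_L(s)$, while for $t>s$ monotonicity gives $g_L(s)t^2\le g_L(t)t^2=h_L'(t)t\le(\alpha+2)h_L(t)$, which is absorbed into $h_L(t)t^2$ for $t\ge1$ and into the constant $1$ for $t<1$.

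The step I expect to need the most care is exactly the uniformity in $L$ of the middle inequality of (c) on the transition layer $[3L/2,2L]$, where $g_L$ no longer equals $g$. A naive comparison of $g_L$ with $g$ there loses a factor $g(2L)/g(3L/2)\sim(4/3)^\alpha$ that is harmless for fixed $\alpha$ but destroys any hope of a clean $(\alpha+1)$-type constant; the integration-by-parts identity above is precisely what avoids this, since it expresses $g_L$ as $\psi g$ plus a nonnegative remainder and therefore bounds the offending term $s\psi g'$ against $g_L$ itself rather than against $g$. This is the mechanism worked out in \cite{BiFu} and \cite{Br}, \cite{Br2}, which I would cite for the construction of $\psi$ and then combine with the elementary estimates above.
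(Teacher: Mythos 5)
Your proof is correct. Note that the paper itself does not prove Lemma \ref{lems41} at all: it simply cites \cite{Br} (Lemma 2.1) and \cite{Br2} (Section 2) for these properties of the truncation construction taken from \cite{BiFu}. Your argument therefore fills in what the paper delegates to references, and it does so soundly. The basic identities $h_L'(s)=s\,g_L(s)$ and $h_L''(s)=g_L(s)+s\psi(s)g'(s)$ are right, (a) and (b) follow from them by monotonicity exactly as you say, and the key point --- the uniformity in $L$ of the upper bound in (c) --- is handled by the correct mechanism: the integration by parts $g_L(s)=\psi(s)g(s)-\int_0^s\psi'g\,d\theta\geq\psi(s)g(s)$ (using $\psi'\leq0$ and $\psi(0)=g(0)=1$), combined with the scale-invariant bound $sg'(s)\leq\alpha g(s)$, yields $h_L''\leq(\alpha+1)h_L'/s$ on all of $[0,\infty)$ with no loss on the transition layer $[3L/2,2L]$. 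The remaining steps are also fine: integrating $\frac{d}{ds}\big(sh_L'(s)\big)\leq(\alpha+2)h_L'(s)$ gives $sh_L'(s)\leq(\alpha+2)h_L(s)$, and the case split $t\leq s$ versus $t>s$ (with $t<1$ absorbed into the constant via $h_L(t)\leq h(1)$) proves (d). The constants you obtain depend on $\alpha$, which is consistent with the statement ($c(\alpha+1)$ in (c)) and with how the lemma is used in the proof of Theorem \ref{thm:4.3}, where only finiteness at each fixed $\alpha$ matters for the iteration.
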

With this preparations the following calculations are well-defined by Theorem \ref{thm:2.3} and Theorem \ref{thm:sub2}. 

\begin{lemma}\label{lems:alphau} 
$\left.\right.$\\
Under the assumptions of Theorem \ref{thm:4.3} we have
\begin{align*}
\E\bigg[\sup_{t\in(0,T)}\int_{G}|\bfu(t)|^q\,dx\bigg]<\infty
\end{align*}
for all $q<\infty$.
\end{lemma}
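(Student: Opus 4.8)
The plan is to apply It\^o's formula to the functional $f(\bfv)=\int_G|\bfv|^q\,dx$ for $q\geq2$, exactly as in the proofs of Theorem \ref{thm:2.3} and Lemma \ref{lems:hm}, but now with a power behaving like $|\bfv|^q$ instead of $|\bfv|^2$. Since $\bfu$ is a strong solution (Theorem \ref{thm:2.3} and Theorem \ref{thm:sub2}), this yields
\begin{align*}
\int_G|\bfu(t)|^q\,dx=\int_G|\bfu_0|^q\,dx+\int_0^t f'(\bfu)\,d\bfu_\sigma+\tfrac12\int_0^t f''(\bfu)\,d\langle\bfu\rangle_\sigma,
\end{align*}
where $f'(\bfu)=q|\bfu|^{q-2}\bfu$ and $f''$ behaves like $|\bfu|^{q-2}$. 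I would then treat the drift, the It\^o correction, and the stochastic integral separately, take the supremum in $t$ and the expectation, and close the estimate by Gronwall's lemma.

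For the drift I would integrate by parts (the traces vanish) to obtain $-q\int_0^t\int_G\nabla(|\bfu|^{q-2}\bfu):\bfS(\nabla\bfu)\,dx\,d\sigma$. Here the Uhlenbeck structure (\ref{eq:nu}) is essential: a direct computation gives
\begin{align*}
\nabla(|\bfu|^{q-2}\bfu):\nabla\bfu=|\bfu|^{q-2}|\nabla\bfu|^2+(q-2)|\bfu|^{q-4}\sum_\gamma\Big(\sum_i u^i\partial_\gamma u^i\Big)^2\geq0
\end{align*}
for $q\geq2$, and since $\nu\geq0$ the whole drift contribution is nonpositive and may simply be discarded. For the It\^o correction, the second derivative produces terms controlled by $|\bfu|^{q-2}\sum_k|g_k|^2$, so by (\ref{eq:phi}) and $|\bfu|^{q-2}(1+|\bfu|^2)\leq c(1+|\bfu|^q)$ it is bounded by $c\int_0^t\int_G(1+|\bfu|^q)\,dx\,d\sigma$.

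The stochastic term $M_t:=\sum_k\int_0^t\int_G q|\bfu|^{q-2}\bfu\cdot g_k(\cdot,\bfu)\,dx\,d\beta_k$ is a martingale, so I would estimate $\E[\sup_t|M_t|]$ by the Burkholder-Davis-Gundy inequality. Writing $|\bfu|^{q-1}=|\bfu|^{q/2}|\bfu|^{(q-2)/2}$ and using Cauchy-Schwarz together with (\ref{eq:phi}) gives
\begin{align*}
\sum_k\Big(\int_G|\bfu|^{q-2}\bfu\cdot g_k\,dx\Big)^2\leq c\Big(\int_G|\bfu|^q\,dx\Big)\Big(\int_G(1+|\bfu|^q)\,dx\Big),
\end{align*}
whence BDG and Young's inequality yield, for arbitrary $\kappa>0$,
\begin{align*}
\E\Big[\sup_{(0,T)}|M_t|\Big]\leq\kappa\,\E\Big[\sup_{(0,T)}\int_G|\bfu|^q\,dx\Big]+c(\kappa)\,\E\Big[\int_0^T\int_G(1+|\bfu|^q)\,dx\,d\sigma\Big].
\end{align*}
Choosing $\kappa$ small absorbs the supremum on the left, and since $\E[\int_G|\bfu_0|^q\,dx]<\infty$ by the hypothesis $\bfu_0\in L^q(\Omega,\F_0,\p,\mathring W^{1,q}(G))$, Gronwall's lemma applied to $t\mapsto\E[\sup_{(0,t)}\int_G|\bfu|^q\,dx]$ finishes the proof.

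The main obstacle is rigor rather than the estimate itself: the functional $\bfv\mapsto\int_G|\bfv|^q\,dx$ is not twice differentiable on $L^2(G)$ and the quantities above are not a priori finite, so It\^o's formula cannot be applied directly and the absorption step cannot be carried out before finiteness is known. As indicated in the proof of Lemma \ref{lems:hm}, I would circumvent this by replacing $z\mapsto z^q$ by a quadratic approximation (or, equivalently, by stopping-time truncation), perform the argument for the regularized functional, and then pass to the limit using monotone convergence together with the uniform a priori bounds; here Lemma \ref{lems:hm} provides the moments needed to justify the passage.
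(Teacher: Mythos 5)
Your proposal is correct and follows essentially the same route as the paper: the paper likewise applies It\^o's formula to a regularized $q$-th power functional (the explicit quadratic approximation $h_L$ of (\ref{eq:hL}) with $\alpha=q-2$, whose properties in Lemma \ref{lems41} make your absorption and Gronwall steps rigorous and uniform in $L$), discards the drift using its sign under the Uhlenbeck structure (\ref{eq:nu}), bounds the It\^o correction by $c\int(1+h_L(|\bfu|))$ via (\ref{eq:phi}), and handles the stochastic integral with Burkholder--Davis--Gundy plus Young before passing to the limit $L\to\infty$. Your closing remark about the need to regularize $z\mapsto z^q$ is precisely the role played by $h_L$ in the paper, so there is no substantive difference.
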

\begin{proof}
We apply It\^{o}'s formula to the function $$f_L(\bfv):=\int_GH_L(\bfv)\,dx:=\int_Gh_L(|\bfv|)\,dx,$$ 
where $h_L$ is defined in (\ref{eq:hL}) and set $\alpha=q-2$. We obtain
\begin{align*}
\int_G&h_L(|\bfu|)\,dx\\&=\int_G\eta^2h_L(|\bfu_0|)\,dx+\int_0^t f'_L(\bfu)d\bfu_\sigma+\frac{1}{2}\int_0^t f''_L(\bfu)\,d\langle\bfu_\sigma\rangle_\sigma\\
&=\int_G\eta^2h_L(|\bfu_0|)\,dx+\int_G \int_0^t DH_L(\bfu)\cdot d\bfu_\sigma\,dx\\&+\int_G\int_0^t D^2H_L(\bfu) \,d\Big\langle\int_0^{\cdot}\Phi(\bfu)\,d\bfW \Big\rangle_\sigma\,dx\\
&=:(I)_q+(II)_q+(III)_q.
\end{align*}
We consider the three integrals separately and decompose the second one into
\begin{align*}
(II)_q&=-(II)^1_q-(II)^2_\alpha+(II)^3_q,\\
(II)_q^1&:=\int_0^t\int_G\tfrac{h_L'(|\bfu|)}{|\bfu|}\bfS(\nabla\bfu):\nabla\bfu\,dx,\\
(II)_q^2&:=\int_0^t\int_G \bfS(\nabla\bfu):\nabla\tfrac{h_L'(|\bfu|)}{|\bfu|}\otimes\bfu\,dx\,d\sigma,\\
(II)_q^3&:=\int_0^t\int_G\tfrac{h_L'(|\bfu|)}{|\bfu|}\bfu
\cdot\Phi(\bfu)\,d\bfW_\sigma\,dx.
\end{align*}
Using the Uhlenbeck structure (\ref{eq:nu}) and Lemma \ref{lems41} c) we gain
\begin{align*}
(II)_q^1&=\int_0^t\int_G\tfrac{h_L'(|\bfu|)}{|\nabla\bfu|}\nu(|\nabla\bfu|)|\nabla\bfu|^2\,dx\geq0,\\
(II)_q^2&=\int_0^t\int_G \nu(|\nabla\bfu|)\nabla\bfu:\tfrac{h_L''(|\bfu|)|\bfu|-h_L'(|\bfu|)}{|\bfu|^2}\nabla|\bfu|\otimes\bfu\,dx\,d\sigma\\
&=\frac{1}{4}\int_0^t\int_G \nu(|\nabla\bfu|)\tfrac{h_L''(|\bfu|)|\bfu|-h_L'(|\bfu|)}{|\bfu|^3}\big|\nabla|\nabla\bfu|^2\big|^2\,dx\,d\sigma\geq0.
\end{align*}
This and the assumptions on $\bfu_0$ imply
\begin{align*}
\E\bigg[\sup_{t\in(0,T)}\int_Gh_L(|\bfu|)\,dx\bigg]&\leq\,c\,\E\bigg[1+\sup_{t\in(0,T)}|(II)_q^3|+\sup_{t\in(0,T)}|(III)_q|\bigg]
\end{align*}
We have by (\ref{eq:phi}) and Lemma \ref{lems41}
\begin{align*}
\sup_{t\in(0,T)}|(III)_q|
&=\frac{1}{2}\sum_{k}\int_G\int_0^TD^2H_L(\bfu)\,d\Big\langle\int_0^{\cdot} g_k(\cdot,\bfu) d\beta_k\Big\rangle_\sigma\,dx\\
&\leq \frac{1}{2}\sum_{k}\int_G\int_0^T|D^2H_L(\bfu)||g_k(\cdot,\bfu) |^2\,d\sigma\,dx\\
&\leq c(q)\sum_{k}\int_G\int_0^T\tfrac{h_L'(|\bfu|)}{|\bfu|}|g_k(\cdot,\bfu)|^2\,d\sigma\,dx\\
&\leq c(q)\int_G\int_0^T h_L'(|\bfu|)|\bfu|^2\,d\sigma\,dx\\
&\leq c(q)\int_G\int_0^Th_L(|\bfu|)\big)\,d\sigma\,dx.
\end{align*}
 Similar to the proof of Theorem \ref{thm:2.3} we gain using $\big(\tfrac{h_L'(s)}{s}+h_L''(s)\big)s^2\leq c(\alpha)h_L(s)$ uniformly in $L$ (recall Lemma \ref{lems41} c))
\begin{align*}
&\,\E\,\bigg[\sup_{t\in(0,T)}|(II)_q^3|\bigg]\leq c\,\E\bigg[\bigg(\int_0^T\bigg(\int_G
h_L(|\bfu|)\,dx\bigg)^2\,dt\bigg]^{\frac 12}\\
&\leq \delta\,\E\bigg[\sup_{(0,T)}\int_G
h_L(|\bfu|)\,dx\bigg]+ c(\delta)\,\E\bigg[\int_\Q
h_L(|\bfu|)\,dx\,dt\bigg].
\end{align*}
Finally we have shown
\begin{align*}
\E\bigg[\sup_{t\in(0,T)}\int_Gh_L(|\bfu|)\,dx\bigg]&\leq\,c\,\E\bigg[1+\int_0^T\int_Gh_L(|\bfu|)\,dx\,dt\bigg]
\end{align*}
and by Gronwall's Lemma
\begin{align*}
\E\bigg[\sup_{t\in(0,T)}\int_Gh_L(|\bfu|)\,dx\bigg]&\leq\,c\,
\end{align*}
uniformly in $L$. Passing to the limit $L\rightarrow\infty$ yields the claim.
\end{proof}

\begin{proof} (of Theorem \ref{thm:4.3})
We apply It\^{o}'s formula to the function $$f_L(\bfv):=\int_G\eta^2H_L(\nabla\bfv)\,dx:=\int_G\eta^2h_L(|\nabla\bfv|)\,dx,$$ 
where $\eta\in C^\infty_0(G)$ is a cut-off function and $h_L$ is defined in (\ref{eq:hL}). We obtain
\begin{align*}
\int_G&\eta^2h_L(|\nabla\bfu|)\,dx\\&=\int_G\eta^2h_L(|\nabla\bfu_0|)\,dx+\int_0^t f'_L(\bfu)d\bfu_\sigma+\frac{1}{2}\int_0^t f''_L(\bfu)\,d\langle\bfu_\sigma\rangle_\sigma\\
&=\int_G\eta^2h_L(|\nabla\bfu_0|)\,dx+\int_G \int_0^t\eta^{2}DH_L(\nabla\bfu): d\nabla\bfu_\sigma\,dx\\&+\int_G\int_0^t \eta^{2}D^2H_L(\nabla\bfu) \,d\Big\langle\int_0^{\cdot}\nabla\big(\Phi(\bfu)\,d\bfW\big) \Big\rangle_\sigma\,dx\\
&=:(I)_\alpha+(II)_\alpha+(III)_\alpha.
\end{align*}
We consider the three integrals separately and decompose the second one into
\begin{align*}
(II)_\alpha&=-(II)^1_\alpha-(II)^2_\alpha-(II)^3_\alpha+(II)^4_\alpha,\\
(II)_\alpha^1&:=\int_0^t\int_G \eta^{2}\tfrac{h_L'(|\nabla\bfu|)}{|\nabla\bfu|}D\bfS(\nabla\bfu)\big(\partial_\gamma\nabla\bfu,\partial_\gamma\nabla\bfu\big),\\
(II)_\alpha^2&:=\int_0^t\int_G \tfrac{h_L'(|\nabla\bfu|)}{|\nabla\bfu|}D\bfS(\nabla\bfu)\big(\partial_\gamma\nabla\bfu,\nabla\eta^{2}\otimes\partial_\gamma\bfu\big)\,dx\,d\sigma,\\
(II)_\alpha^3&:=\int_0^t\int_G \eta^2D\bfS(\nabla\bfu)\big(\partial_\gamma\nabla\bfu,\nabla\tfrac{h_L'(|\nabla\bfu|)}{|\nabla\bfu|}\otimes\partial_\gamma\bfu\big)\,dx\,d\sigma,\\
(II)_\alpha^4&:=\int_0^t\int_G\eta^{2}\tfrac{h_L'(|\nabla\bfu|)}{|\nabla\bfu|}\nabla\bfu
:\nabla\big(\Phi(\bfu)\,d\bfW_\sigma\big)\,dx.
\end{align*}
Using the assumptions on $\bfS$, see (\ref{eq:Sp}), we obtain
\begin{align*}
(II)_\alpha^1
&\geq c\int_0^t\int_G \eta^{2}\tfrac{h_L'(|\nabla\bfu|)}{|\nabla\bfu|}(1+|\nabla\bfu|)^{p-2}
|\nabla^2\bfu|^2\,dx\,d\sigma.
\end{align*}
For the second term we gain for every $\delta>0$ using Young's inequality and Lemma \ref{lems41}
\begin{align*}
(II)_\alpha^2
&\leq \delta (II)_\alpha^1+c(\delta)\int_0^t\int_{\support{\eta}}\tfrac{h_L'(|\nabla\bfu|)}{|\nabla\bfu|}D\bfS(\nabla\bfu)\big(\nabla\eta^{2}\otimes\partial_\gamma\bfu,\nabla\eta^{2}\otimes\partial_\gamma\bfu\big)\,dx\,d\sigma\\
&\leq \delta (II)_\alpha^1+c(\delta)\int_0^t\int_{\support{\eta}} (1+|\nabla\bfu|)^{p-2}h_L(|\nabla\bfu|)\,dx\,d\sigma.
\end{align*}
Thanks to assumption (\ref{eq:nu}) and Lemma \ref{lems41} it holds\footnote{for a detailed explanation of this step we refer to \cite{Bi}, (32) on p. 62.}
\begin{align*}
(II)_\alpha^3&=\int_0^t\int_G \eta^2D\bfS\big(\partial_\gamma\nabla\bfu,\nabla\tfrac{h_L'(|\nabla\bfu|)}{|\nabla\bfu|}\otimes\partial_\gamma\bfu\big)\,dx\,d\sigma\\
&=\frac{1}{2}\int_0^t\int_G \eta^2D\bfS\big(e_\gamma\otimes\nabla\tfrac{h_L'(|\nabla\bfu|)}{|\nabla\bfu|},e_\gamma\otimes\nabla|\nabla\bfu|^2\big)\,dx\,d\sigma\\
&=\frac{1}{2}\int_0^t\int_G \eta^2\tfrac{h_L''(|\nabla\bfu|)|\nabla\bfu|-h_L'(|\nabla\bfu|)}{|\nabla\bfu|^3}D\bfS\big(e_\gamma\otimes\nabla|\nabla\bfu|^2,e_\gamma\otimes\nabla|\nabla\bfu|^2\big)\,dx\,d\sigma
\\&\geq0.
\end{align*}
Moreover, we have by (\ref{eq:phi}) and Lemma \ref{lems41}
\begin{align*}
(III)_\alpha
&=\frac{1}{2}\sum_{k}\int_G\int_0^t\eta^{2}D^2H_L(\nabla\bfu)\,d\Big\langle\int_0^{\cdot} \nabla\big(g_k(\cdot,\bfu) \big)d\beta_k\Big\rangle_\sigma\,dx\\
&\leq\frac{1}{2}\sum_{k}\int_G\int_0^t\eta^{2}|D^2H_L(\nabla\bfu)||\nabla\big(\cdot,g_k(\bfu) \big)|^2\,d\sigma\,dx\\
&\leq \sum_{k}\int_G\int_0^t\eta^{2}\Big(h_L''(|\nabla\bfu|)+\tfrac{h_L'(|\nabla\bfu|)}{|\nabla\bfu|}\Big) |\nabla\big( g_k(\cdot,\bfu)\big)\Big|^2\,d\sigma\,dx\\
&\leq c(\alpha+1)\sum_{k}\int_G\int_0^t\eta^{2}\tfrac{h_L'(|\nabla\bfu|)}{|\nabla\bfu|}\big(|\nabla_{\bfxi}g_k(\cdot,\bfu)\nabla\bfu|^2+|\nabla_x g_k(\cdot,\bfu)|^2\big)\,d\sigma\,dx\\
&\leq c(\alpha+1)\int_G\int_0^t\eta^{2}\tfrac{h_L'(|\nabla\bfu|)}{|\nabla\bfu|}\big(|\nabla\bfu|^2+|\bfu|^2\big)\,d\sigma\,dx\\
&\leq c(\alpha)\int_G\int_0^t\eta^{2}\big(1+h_L(|\nabla\bfu|)+h_L(|\bfu|)|\bfu|^2\big)\,d\sigma\,dx.
\end{align*}
In the last step we applied Lemma \ref{lems41} c) and
 d).Thus we obtain taking the supremum, the $q$-th power and applying expectations
\begin{align}\label{eq:hLq}
\begin{aligned}
\E&\bigg[\sup_{(0,T)}\int_G\eta^{2}h_L(|\nabla\bfu|)\,dx+\int_0^T\int_G \eta^{2}\tfrac{h_L'(|\nabla\bfu|)}{|\nabla\bfu|}(1+|\nabla\bfu|)^{p-2}
|\nabla^2\bfu|^2\,dx\,d\sigma\bigg]^q\\
&\leq c(\eta,\alpha)\,\E\bigg[1+\int_Gh_L(|\nabla\bfu_0|)\,dx+\int_0^T\int_{\support{\eta}}(1+|\nabla\bfu|)^{p-2}h_L(|\nabla\bfu|)\,dx\,d\sigma\bigg]^q\\
&+c(\eta,\alpha)\int_0^T\E\bigg[\int_G\eta^{2}\big(h_L(|\nabla\bfu|)+h_L(|\bfu|)|\bfu|^2\big)\,dx\bigg]^q\,d\sigma+c\,\E\bigg[\sup_{(0,T)}|(II)_\alpha^4(t)|\bigg]^q.
\end{aligned}
\end{align}
 Similar to the proof of Theorem \ref{thm:2.3} we gain using $\big(\tfrac{h_L'(s)}{s}+h_L''(s)\big)s^2\leq c(\alpha)h_L(s)$ uniformly in $L$ (recall Lemma \ref{lems41} c))
\begin{align*}
\,\E\,\bigg[\sup_{t\in(0,T)}|(II)_\alpha^4|\bigg]^q%\leq c\,\E\bigg[\bigg(\int_0^T\bigg(\int_G\eta^2 h_L(|\nabla\bfu|)\,dx\bigg)^2\,dt\bigg]^{\frac q2}\\
&\leq \delta\,\E\bigg[\sup_{(0,T)}\int_G\eta^2
h_L(|\nabla\bfu|)\,dx\bigg]^q+ c(\delta)\,\int_0^T\E\bigg[\int_G\eta^2
h_L(|\nabla\bfu|)\,dx\bigg]^q\,dt\\
&+ c\,\E\bigg[\sup_{(0,T)}\int_G\eta^2
h_L(|\nabla\bfu|)\,dx\bigg]^q.
\end{align*}
If we choose $\delta$ small enough we can remove the term involving $(II)_\alpha^4$ from the right-hand-side of (\ref{eq:hLq}).
By Gronwall's Lemma, the assumptions on $\bfu_0$ and Lemma \ref{lems:alphau} we end up with
\begin{align}\label{eq:highq}
\E&\bigg[\int_G\eta^2h_L(|\nabla\bfu(t)|)\,dx+\int_\Q\eta^2\tfrac{h_L'(|\nabla\bfu|)}{|\nabla\bfu|}(1+|\nabla\bfu|)^{p-2}
|\nabla^2\bfu|^2\,dx\,dt\bigg]^q\\
&\leq c(\eta,\alpha)\E\bigg[1+\int_0^t\int_{\support{\eta}}(1+|\nabla\bfu|)^{p-2}h_L(|\nabla\bfu|)\,dx\,d\sigma\bigg]^q.
\nonumber
\end{align}
Assume for a moment that 
\begin{align}\label{eq:Lalpha}
\nabla\bfu\in L^q(\Omega,\F,\p;L^{p+\alpha}((0,T)\times G'))\quad\forall G'\Subset G,\,\, \forall q<\infty,\\
\bfu\in L^q(\Omega,\F,\p;L^{\infty}(0,T;L^{\alpha+2}(G')))\quad\forall G'\Subset G,\,\, \forall q<\infty,
\end{align}
Then we are allowed to go to the limit $L\rightarrow\infty$ on the r.h.s. of (\ref{eq:highq}). By Fatou's Theorem we are now allowed to do this on the l.h.s. as well. We obtain
\begin{align*}
\E&\bigg[\sup_{(0,T)}\int_G\eta^2h(|\nabla\bfu(t)|)\,dx+\int_\Q\eta^2|\nabla(1+|\nabla\bfu|)^{\frac{p+\alpha}{2}}|^2\,dx\,dt\bigg]^q\\&\leq c(\eta,\alpha)\bigg(1+\E\bigg[\int_0^T\int_{\support{\eta}}|\nabla\bfu|^{p+\alpha}\,dx\,dt\bigg]^q\bigg).
\end{align*}
This yields
\begin{align*}
|\nabla\bfu|^{\frac{p+\alpha}{2}}\in L^q(\Omega,\F,\p; L^2(0,T;W^{1,2}_{loc}(G)))\cap L^{q}(\Omega,\F,\p;L^\infty(0,T;L_{loc}^{2\frac{\alpha+2}{\alpha+p}}(G)))\quad \forall q<\infty.
\end{align*}
A parabolic interpolation (see for instance \cite{Am}, Thm. 3.1) shows on account of $p>2-\frac{4}{d}$
\begin{align}\label{eq:Lalpha'}
\nabla\bfu&\in L^q(\Omega,\F,\p;L^{\omega(\alpha)}(0,T)\times G'))\quad\forall G'\Subset G,\,\,\forall q<\infty,\\
\omega(\alpha)&:=(p+\alpha)\Big(1+\frac{2}{d}\frac{\alpha+2}{\alpha+p}\Big).
%\omega(\alpha)&:=(1-\vartheta)\Big(1+\frac{2}{d}(\alpha+2)\Big)+\vartheta(\alpha+2),
\end{align}
Since (\ref{eq:Lalpha}) is true for $\alpha=0$ (by Lemma \ref{lems:hm}) we start an iteration procedure by
\begin{align*}
\alpha_0:=0,\quad\alpha_{k+1}:=\omega(\alpha_k)-p,\quad k\in\N.
\end{align*}
%If we choose $\vartheta:=\frac{1}{dp+1}\in(0,1)$ then
%\begin{align*}
%\omega(\alpha)&\geq (p+\alpha)(1-\vartheta)\Big(1+\frac{2}{d}\min\set{1,\frac{2}{p}}\Big)\\
%&\geq (p+\alpha)(1-\vartheta)\Big(1+\frac{2}{dp}\Big)\\
%&= (p+\alpha)\frac{dp+2}{dp+1}.
%\end{align*}
On account of $\alpha_k\rightarrow\infty$ the claim is proven.
\end{proof}

\begin{remark}
As already observed in \cite{DiFr}, Remark 2.1., for the deterministic problem, it is not possible to obtain $L^\infty$-bounds for $\nabla \bfu$ except of the case $p=2$%\footnote{Here we think about non-linear problems with quadratic growth and modulus dependence, for example $\bfS(\bfxi)=\bfxi+\tfrac{\log(1+|\bfxi|)}{|\bfxi|}\bfxi$.}
via Moser iteration. So it is an open question if one can gain Lipschitz regularity
for the stochastic problem. In the deterministic case this is shown using the DeGiorgi method (see \cite{DiFr}, Lemma 2.3). However it is not clear if similar arguments will work for stochastic problems.
\end{remark}

\subsection*{Acknowledgement}
\begin{itemize}
\item The work of the author was supported by Leopoldina (German National Academy of Science).
\item The author wishes to thank the referee for the careful reading of the manuscript and many helpful advises.
\end{itemize}

\end{document}